\documentclass[12pt,reqno]{amsart}
\usepackage{amssymb,amsfonts,amsthm}
\usepackage{ulem}
\usepackage{amsthm}
\usepackage{amssymb}
\usepackage[english,american]{babel}
\usepackage{enumerate}
\usepackage{setspace}
\usepackage{hyperref}
\setlength{\textwidth}{16cm}
\setlength{\textheight}{23cm}
\setlength{\topmargin}{-1cm}
\setlength{\oddsidemargin}{-1mm}
\setlength{\evensidemargin}{-1mm}
\raggedbottom


\usepackage{color}

\definecolor{Red}{cmyk}{0,1,1,0.2}

\usepackage{amsfonts,amsmath,layout}





\newcommand{\R}{\mathbb R}



\def\R{\mathbb R}

\newcommand{\be}{\begin{equation}}
\newcommand{\ee}{\end{equation}}

\def\1{{\bf 1}}

%





\newtheorem{Theorem}{Theorem}[section]
\newtheorem{Definition}[Theorem]{Definition}
\newtheorem{Proposition}[Theorem]{Proposition}
\newtheorem{Lemma}[Theorem]{Lemma}

\begin{document}
\title[Quasi linear parabolic pde posed on a network]{Quasi linear parabolic pde posed on a network with non
linear Neumann boundary condition at vertices}
\author{Isaac Ohavi}
\email{isaacohavi@gmail.com}
\dedicatory{Version: \today}
\maketitle
\begin{abstract} 
The purpose of this article is to study quasi linear parabolic partial differential equations of second order, posed on a bounded network, satisfying a nonlinear and non dynamical Neumann boundary condition at the vertices. We prove the existence and the uniqueness of a classical solution.
\end{abstract}
\section{Introduction}\label{sec intro}
The purpose of this article is to study quasi linear parabolic partial differential equations of second order, posed on a bounded network, satisfying a nonlinear
and non dynamical Neumann boundary condition at the vertices, also called the junction points.
For simplicity of notation, we have focused on a network containing a single junction.
More precisely, giving a final time $T>0$, if $I\in \mathbb{N}^*$ denotes the number of edges of the single junction, $a_i>0$ is the length of each edge $i$ $(i\in \{1\ldots I\}$), and $"0"$ denotes the junction point, the problem is reduced to:
\begin{eqnarray}\label{eq de base}
\begin{cases}\partial_tu_i(t,x)-\sigma_i(x,\partial_xu_i(t,x))\partial_x^2u_i(t,x)+\\
H_i(x,u_i(t,x),\partial_xu_i(t,x))~~=~~0,~~\text{ if } (t,x) \in (0,T)\times (0,a_i),\\
F(u(t,0),\partial_x u(t,0))~~=~~0, ~~ \text{ if } t\in[0,T),\\
\text{with}~~u(t,0)=(u_1(t,0),\ldots,u_I(t,0)),~~\partial_x u(t,0)=(\partial_xu_1(t,0),\ldots,\partial_xu_I(t,0)),\\
\text{and}~~\forall (i,j)\in\{1\ldots I\}^2,~~u_i(t,0)=u_j(t,0),\\
\forall i\in\{1\ldots I\},~~ u_i(t,a_i)~~=~~\phi_i(t),~~ \text{ if } t \in [0,T],\\
\forall i\in\{1\ldots I\},~~ u_i(0,x)~~=~~g_i(x),~~ \text{ if } x\in [0,a_i].\\
\end{cases}
\end{eqnarray}
To simplify our study, we impose Dirichlet boundary condition ($\phi_i$) at the $a_i$. One can consider classical Neumann boundary conditions, without adding any mathematical technical issues for the problem \eqref{eq de base}.
The multi-junction setting involving the equations of type \eqref{eq de base} can be treated with similar tools, with a Neumann boundary condition $F$ (or a classical Dirichlet boundary condition) at the junction points.
The well-known Kirchhoff law corresponds to the case where $F$ is linear in $\partial_xu$ and independent of $u$, (for instance: $F(\partial_x u(t,0))=\sum_{i=1}^I \partial_xu_i(t,0)=0$).

Originally  introduced by Nikol'skii \cite{anal nikolski} and Lumer \cite{Lumer 1, Lumer 2}, the concept of ramified spaces and the analysis of partial differential equation on these spaces have attracted a lot of attention in the last 30 years. As explained in  \cite{anal nikolski}, the main motivations are applications in physics, chemistry, and biology (for instance small transverse vibrations in a grid of strings, vibration of a grid of beams, drainage system, electrical equation with Kirchhoff law, wave equation, heat equation,...). 
Concerning applications in biology, we can cite for instance the recent works (\cite{DLPZ},\cite{JPS} and \cite{Vasilyeva}), where equations of type \eqref{eq de base} are used in the multi-junction case to modelize the dynamic of species in river networks.
Linear diffusions of the form \eqref{eq de base}, with a Kirchhoff law, are also naturally associated with stochastic processes living on graphs. These processes were introduced in the seminal papers \cite{freidlinS} and \cite{Freidlin}. Another motivation for studying \eqref{eq de base} is the analysis of associated stochastic optimal control problems with a control at the junction.  

There have been several works on linear and quasilinear parabolic equations of the form \eqref{eq de base}. For linear equations, von Below \cite{Below 0}  shows that, under natural smoothness and compatibility conditions, linear boundary value problems posed on a junction with a linear Kirchhoff condition at the junction point is well-posed. The proof consists mainly in showing that the initial boundary value problem posed on a junction is equivalent to a well-posed initial boundary value problem for a parabolic system, where the boundary conditions are such that the classical results on linear parabolic equations \cite{pde para} can be applied. The same author investigates in \cite{Below 3} the strong maximum principle for semi linear parabolic operators with Kirchhoff condition, while in \cite{Below 4} he studies  the classical global solvability for a class of semilinear parabolic equations on ramified networks, where a dynamical node condition is prescribed: Namely the Neumann condition at the junction point $x=0$ in \eqref{eq de base}, is replaced by the dynamic one:
\begin{eqnarray*}
\partial_tu(t,0) + F(t,u(t,0),\partial_xu(t,0))~~=~~0.
\end{eqnarray*}
In this way the application of classical estimates for domains established in \cite{pde para} becomes possible. The author then establishes the classical solvability in the class $\mathcal{C}^{1+\alpha,2+\alpha}$, with the aid of the Leray-Schauder-principle and the maximum principle of \cite{Below 3}. Let us  note that this kind of proof fails for equation \eqref{eq de base} because in this case one cannot expect an uniform bound for the term $|\partial_tu(t,0)|$ (the proof of Lemma 3.1 of \cite{pde para} VI.3 fails). Still in the linear setting, another approach, yielding similar existence results, was developed  by Fijavz, Mugnolo and Sikolya in \cite{M-K}: the idea is to use semi-group theory as well as variational methods to understand how the spectrum of the operator is related to the  structure of the network. 
 
Equations of the form \eqref{eq de base} can also be analyzed in terms of viscosity solutions. The first results on viscosity solutions for Hamilton-Jacobi equations on networks have been obtained by Schieborn in \cite{D-S these} for the Eikonal equations and later discussed in many contributions on first order problems \cite{Camilli 1, Imbert Nguyen, Lions College France}, elliptic equations \cite{Lions Souganidis 1} and second order problems with vanishing diffusion at the vertex \cite{Lions Souganidis 2}. 
In contrast second order Hamilton-Jacobi equations with a non vanishing viscosity at the boundary have seldom been studied in the literature and our aim is to show the well-posedness of classical solutions for \eqref{eq de base} in suitable H\"{o}der spaces: see Theorem \ref{th : exis para} for the existence and  Theorem \ref{th : para comparison th} for the comparison, and thus the uniqueness. Our main assumptions are that the equation is uniformly parabolic with smooth coefficients and that the term $F=F(u,p)$ at the junction is either decreasing with respect to $u$ or increasing with respect to $p$.
The main idea of the proof is to use a time discretization, exploiting at each step the solvability in $C^{2+\alpha}$ of the elliptic problem:
\begin{eqnarray}\label{eq base 2}
\begin{cases}
-\sigma_i(x,\partial_x u_i(x))\partial_x^2u_i(x)+H_i(x,u_i(x),\partial_xu_i(x))~~=~~0,~~\text{if}~~x\in(0,a_i) \\
F(u(0),\partial_xu(0))~~=~~0,\\
\text{with}~~u(0)=(u_1(0),\ldots,u_I(0)),~~\partial_x u(0)=(\partial_xu_1(0),\ldots,\partial_xu_I(0)),\\
\text{and}~~\forall (i,j)\in\{1\ldots I\}^2,~~u_i(0)=u_j(0),\\
\forall i\in\{1 \ldots I\},~~u_i(a_i)=\phi_i.
\end{cases}
\end{eqnarray}

The paper is organized as follows. In section \ref{sec main results}, we introduce the notations and state our main results. In Section \ref{sec review elliptic problem}, we review the mains results of existence and uniqueness of the elliptic problem \eqref{eq base 2}. Finally Section \ref{sec para probl}, is dedicated to the proof of our main results.
\section{main results}\label{sec main results}
In this section we state our main result Theorem \ref{th : exis para}, on the solvability of the parabolic problem with Neumann boundary condition at the vertex, posed on a bounded junction \eqref{eq de base}, stated in Introduction:
\begin{eqnarray*}\begin{cases}\partial_tu_i(t,x)-\sigma_i(x,\partial_xu_i(t,x))\partial_x^2u_i(t,x)+\\
H_i(x,u_i(t,x),\partial_xu_i(t,x))~~=~~0,~~\text{ if } (t,x) \in (0,T)\times (0,a_i),\\
F(u(t,0),\partial_x u(t,0))~~=~~0, ~~ \text{ if } t\in[0,T),\\
\text{with}~~u(t,0)=(u_1(t,0),\ldots,u_I(t,0)),~~\partial_x u(t,0)=(\partial_xu_1(t,0),\ldots,\partial_xu_I(t,0)),\\
\text{and}~~\forall (i,j)\in\{1\ldots I\}^2,~~u_i(t,0)=u_j(t,0),\\
\forall i\in\{1\ldots I\},~~ u_i(t,a_i)~~=~~\phi_i(t),~~ \text{ if } t \in [0,T],\\
\forall i\in\{1\ldots I\},~~ u_i(0,x)~~=~~g_i(x),~~ \text{ if } x\in [0,a_i].\\
\end{cases} \eqref{eq de base}
\end{eqnarray*}
There will be two typical assumptions for $F= F(u,p)$: either $F$ is decreasing with respect to $u$ or $F$ is increasing with respect to $p$ (Kirchhoff conditions).

\subsection{Notations and preliminary results}\label{subec main result}

Let us start by introducing the main notation used in this paper as well as an interpolation result.
Let $I\in \mathbb{N}^*$ be the number of edges, and $a=(a_1,\ldots a_I)\in (0,\infty)^I$ be the length of each edge. The bounded junction $\mathcal{J}^a$ is defined by
\begin{eqnarray*}\mathcal{J}^a &  = & \bigcup_{i =1}^I J_{i}^{a_i},\text{ with: } \forall i\in \{1\ldots I\}~~J_{i}^{a_i}:=[0,a_i],~~\text{and}~~\forall (i,j)\in \{1\ldots I\}^2,~~i\neq j,~~J_{i}^{a_i}\cap J_{j}^{a_j}=\{0\}.
\end{eqnarray*}
The intersection of the $(J_{i}^{a_i})_{1 \leq i\leq I}$ is called the junction point and is denoted by $0$.\\
We identify  all the points of $\mathcal{J}^a$ by the couples $(x,i)$ (with $i \in\{1\ldots I\} , x\in|0,\max_{i\in\{1\ldots I\}} a_i]$), such that we have: $(x,i)\in \mathcal{J}^a$ if and only if $x\in J_{i}^{a_i}$.
For $T>0$, the time-space domain $\mathcal{J}^a_T$ is defined by
\begin{eqnarray*}\mathcal{J}^a_T &  = & [0,T]\times\mathcal{J}^a.
\end{eqnarray*}
The interior of $\mathcal{J}_{T}^a$ set minus the junction point $0$ is denoted by $\overset{\circ}{\mathcal{J}_{T}^a}$, and is defined by
\begin{eqnarray*}
\overset{\circ}{\mathcal{J}_{T}^a}~~=~~(0,T)\times\Big(\bigcup_{i=1}^I\overset{\circ}{J_{i}^{a_i}} \Big). 
\end{eqnarray*}
For the functionnal spaces that will be used in the sequel, we  use here the notations of Chapter 1.1 of \cite{pde para}. For the convenience of the reader, we recall these notations in Appendix \ref{sec : functionnal spaces}.
In addition we introduce the  parabolic H\"{o}lder space on the junction $\Big(\mathcal{C}^{\frac{l}{2},l}(\mathcal{J}^a_T),\|.\|_{\mathcal{C}^{\frac{l}{2},l}(\mathcal{J}^a_T)}\Big)$ and the space $\mathcal{C}_b^{\frac{l}{2},l}(\overset{\circ}{\mathcal{J}^a_T})$, defined by (where $l>0$, see Annexe \ref{sec : functionnal spaces} for more details)
\begin{eqnarray*}
&\mathcal{C}^{\frac{l}{2},l}(\mathcal{J}^a_T)~~:=\Big\{~~f:\mathcal{J}^a_T\to\R,~~(t,(x,i))\mapsto f_i(t,x), ~~ 
\forall (i,j)\in \{1\ldots I\}^{2},~~ \forall t\in(0,T),\\&~ f_i(t,0)=f_j(t,0),~~\forall i\in \{1\ldots I\},~~
(t,x)\mapsto f_i(t,x)\in \mathcal{C}^{\frac{l}{2},l}([0,T]\times[0,a_i])~~\Big\},\\
&\mathcal{C}_b^{\frac{l}{2},l}(\overset{\circ}{\mathcal{J}^a_T})~~:=\Big\{~~f:\mathcal{J}^a_T\to\R,~~(t,(x,i))\mapsto f_i(t,x),\\&~~\forall i\in \{1\ldots I\},~~
(t,x)\mapsto f_i(t,x)\in \mathcal{C}_b^{\frac{l}{2},l}((0,T)\times(0,a_i))~~\Big\},
\end{eqnarray*}
with:
\begin{eqnarray*}
\|u\|_{\mathcal{C}^{\frac{l}{2},l}(\mathcal{J}^a_T)}~~=~~\sum_{1\leq i\leq I} \|u_i\|_{\mathcal{C}^{\frac{l}{2},l}([0,T]\times[0,a_i])}.
\end{eqnarray*}
We will use the same notations, when the domain does not depend on time, namely $T=0$, $\Omega_T=\Omega$, removing the dependence on the time variable.

We continue with the definition of a nondecreasing map $F:\R^I\to \R$.
Let $(x=(x_1,\ldots x_I),y=(y_1\ldots y_I))\in \R^{2I}$, we say that 
\begin{eqnarray*}
x \leq y, \text{ if }~~ \forall i\in \{1\ldots I\},~~ x_i\leq y_i,
\end{eqnarray*}
and
\begin{eqnarray*}
x<y, \text{ if }~~x\leq y,\text{ and there exists } j\in \{1\ldots I\}, ~~x_j<y_j .
\end{eqnarray*}
We say that $F\in \mathcal{C}(\R^I,\R)$ is nondecreasing if
\begin{eqnarray*}
\forall (x,y)\in \R^I, \text{ if } x\leq y, \text{ then } F(x)\leq F(y),
\end{eqnarray*}
increasing if
\begin{eqnarray*}
\forall (x,y)\in \R^I, \text{ if } x<y, \text{ then } F(x)<F(y).
\end{eqnarray*}Next we recall an interpolation inequality, which will be useful in the sequel.
\begin{Lemma}\label{lm : cont deruiv temps}
Suppose that $u \in \mathcal{C}^{0,1}([0,T]\times[0,R])$ satisfies an H\"{o}lder condition in $t$ in $[0,T]\times [0,R]$, with exponent $\alpha\in (0,1]$, constant $\nu_1$, and has derivative $\partial_xu$, which for any $t\in[0,T]$ are H\"{o}lder continuous in the variable $x$, with exponent $\gamma\in(0,1]$, and constant $\nu_2$. Then the derivative $\partial_xu$ satisfies in $[0,T]\times[0,R]$, an H\"{o}lder condition in $t$, with exponent $\frac{\alpha\gamma}{1+\gamma}$, and constant depending only on  $\nu_1,\nu_2,\gamma$. More precisely 
\begin{eqnarray*}
&\forall (t,s)\in[0,T]^2,~~|t-s|\leq 1,~~\forall x\in[0,R],\\
&|\partial_xu(t,x)-\partial_xu(s,x)|~~~\leq ~~\Big(2\nu_2\Big(\frac{\nu_1}{\gamma \nu_2}\Big)^{\frac{\gamma}{1+\gamma}}~~+~~2\nu_1\Big(\frac{\gamma\nu_2}{ \nu_1}\Big)^{-\frac{1}{1+\gamma}}\Big)|t-s|^{\frac{\alpha\gamma}{1+\gamma}}.
\end{eqnarray*}
\end{Lemma}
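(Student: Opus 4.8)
The plan is to prove the estimate by the classical interpolation trick: compare $\partial_x u$ at the two times $t$ and $s$ through the finite difference quotient of $u$ over a spatial interval of adjustable length $h$, and then optimize in $h$. First I would fix $(t,s)\in[0,T]^2$ with $|t-s|\le 1$, fix $x\in[0,R]$, and introduce a free parameter $h\in(0,R]$ to be chosen later. Choosing a direction so that $[x,x+h]\subset[0,R]$, the fundamental theorem of calculus gives $u(t,x+h)-u(t,x)=\int_x^{x+h}\partial_x u(t,\xi)\,d\xi$, and since $\partial_x u(t,x)$ is constant in $\xi$,
\begin{equation*}
\partial_x u(t,x)=\frac{u(t,x+h)-u(t,x)}{h}+\frac1h\int_x^{x+h}\bigl(\partial_x u(t,x)-\partial_x u(t,\xi)\bigr)\,d\xi.
\end{equation*}
Writing the same identity at time $s$ and subtracting, I would group the result into a ``time-increment'' part and a ``spatial-H\"older'' part:
\begin{equation*}
\partial_x u(t,x)-\partial_x u(s,x)=\frac{\bigl(u(t,x+h)-u(s,x+h)\bigr)-\bigl(u(t,x)-u(s,x)\bigr)}{h}+\frac1h\int_x^{x+h}\bigl[(\partial_x u(t,x)-\partial_x u(t,\xi))-(\partial_x u(s,x)-\partial_x u(s,\xi))\bigr]\,d\xi.
\end{equation*}

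Next I would estimate each part separately. The first part is controlled by the H\"older condition in $t$ with constant $\nu_1$: each of the two numerators is bounded by $\nu_1|t-s|^\alpha$, so this term is at most $2\nu_1|t-s|^\alpha/h$. For the second part, each bracket $|\partial_x u(\cdot,x)-\partial_x u(\cdot,\xi)|$ is bounded by $\nu_2|x-\xi|^\gamma\le\nu_2 h^\gamma$ by the spatial H\"older condition, so the integrand is at most $2\nu_2 h^\gamma$ and this term is at most $2\nu_2 h^\gamma$. Combining the two bounds yields, for every admissible $h$,
\begin{equation*}
|\partial_x u(t,x)-\partial_x u(s,x)|\le 2\nu_2 h^\gamma+\frac{2\nu_1}{h}|t-s|^\alpha.
\end{equation*}

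Finally I would optimize the right-hand side over $h$. Minimizing the convex function $h\mapsto 2\nu_2 h^\gamma+2\nu_1|t-s|^\alpha h^{-1}$ gives the choice $h=\bigl(\nu_1|t-s|^\alpha/(\gamma\nu_2)\bigr)^{1/(1+\gamma)}=\bigl(\nu_1/(\gamma\nu_2)\bigr)^{1/(1+\gamma)}|t-s|^{\alpha/(1+\gamma)}$. Substituting this value, and using the identity $\alpha-\alpha/(1+\gamma)=\alpha\gamma/(1+\gamma)$ for the exponent of $|t-s|$ in the second term, reproduces exactly the two coefficients of the statement multiplied by $|t-s|^{\alpha\gamma/(1+\gamma)}$; this is a purely elementary verification.

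The point requiring care is the boundary of $[0,R]$: the representation of $\partial_x u(t,x)$ presupposes that the increment interval lies inside the domain, so one must switch between $[x,x+h]$ and $[x-h,x]$ according to the position of $x$, and check that the minimizing $h$ is admissible. This is precisely where the restriction $|t-s|\le 1$ enters, as it keeps $h$ bounded by the fixed quantity $\bigl(\nu_1/(\gamma\nu_2)\bigr)^{1/(1+\gamma)}$ depending only on $\nu_1,\nu_2,\gamma$. Once an admissible direction is fixed the two estimates above are symmetric, so the argument closes identically in either case, and the constant in the conclusion depends only on $\nu_1,\nu_2,\gamma$ as claimed.
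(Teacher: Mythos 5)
Your argument is exactly the paper's: the identity you write (difference quotient plus spatial-H\"older correction) is the same three-term decomposition the paper obtains by averaging over $[x,y]$, the two bounds $2\nu_1|t-s|^\alpha/h$ and $2\nu_2h^\gamma$ coincide, and the optimization in $h=y-x$ is identical, including the same handling of the boundary by choosing the direction of the increment. The proposal is correct and essentially the same as the paper's proof.
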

This is a special case of Lemma II.3.1, in \cite{pde para}, (see also \cite{anal nikolski}). The main difference is that we are able to get global H\"{o}lder regularity in $[0,T]\times[0,R]$ for $\partial_xu$ in its first variable. Let us recall that this kind of result fails in higher dimensions.
\begin{proof}
Let $(t,s)\in [0,T]^2$, with $|t-s|\leq 1$, and $x\in[0,R]$.
Suppose first that $x\in [0,\frac{R}{2}]$.
Let $y\in[0,R]$, with $y\neq x$, we write:
\begin{eqnarray*}
&\partial_xu(t,x)-\partial_xu(s,x)~~=\\
&~~\displaystyle\frac{1}{y-x}\int_x^y~~(\partial_xu(t,x)-\partial_xu(t,z))+(\partial_xu(t,z)-\partial_xu(s,z))+(\partial_xu(s,z)-\partial_xu(s,x))~~dz.
\end{eqnarray*}
Using the H\"{o}lder condition in time satisfied by $u$, we have:
\begin{eqnarray*}
\Big|\frac{1}{y-x}\int_x^y~~(\partial_xu(t,z)-\partial_xu(s,z))dz\Big|~~\leq~~\frac{2\nu_1|t-s|^\alpha}{|y-x|}.
\end{eqnarray*}
On the other hand, using the H\"{o}lder regularity of $\partial_xu$ in space satisfied, we have: 
\begin{eqnarray*}
\Big|\frac{1}{y-x}\int_x^y~~(\partial_xu(t,x)-\partial_xu(t,z))+(\partial_xu(s,z)-\partial_xu(s,x))dz\Big|~~\leq~~2\nu_2|y-x|^\gamma.
\end{eqnarray*}
It follows:
\begin{eqnarray*}
|\partial_xu(t,x)-\partial_xu(s,x)|~~\leq~~2\nu_2|y-x|^\gamma~~+~~\frac{2\nu_1|t-s|^\alpha}{|y-x|}.
\end{eqnarray*}
Assuming that $|t-s|\leq \Big((\frac{3R}{2})^{1+\gamma}\frac{\gamma\nu_2}{\nu_1}\Big)^{\frac{1}{\alpha}}\wedge 1$, minimizing in $y\in[0,R]$, for $y>x$, the right side of the last equation, we get that the infimum is reached for 
\begin{eqnarray*}
y^*~~=~~x~~+~~\Big(\frac{\nu_1|t-s|^\alpha}{\gamma \nu_2}\Big)^{\frac{1}{1+\gamma}},
\end{eqnarray*}
and then:
\begin{eqnarray*}
|\partial_xu(t,x)-\partial_xu(s,x)|~~\leq ~~C(\nu_1,\nu_2,\gamma)|t-s|^{\frac{\alpha\gamma}{1+\gamma}},
\end{eqnarray*}
where the constant $C(\nu_1,\nu_2,\gamma)$, depends only on the data $(\nu_1,\nu_2,\gamma)$,  and is given by:
\begin{eqnarray*}
C(\nu_1,\nu_2,\gamma)~~=~~2\nu_2\Big(\frac{\nu_1}{\gamma \nu_2}\Big)^{\frac{\gamma}{1+\gamma}}~~+~~2\nu_1\Big(\frac{\gamma\nu_2}{ \nu_1}\Big)^{-\frac{1}{1+\gamma}}.
\end{eqnarray*}
For the cases $y<x$, and $x\in [\frac{R}{2},R]$, we argue similarly, which completes the proof.
\end{proof}
\subsection{Assumptions and main results}
We state in this subsection the central Theorem of this note, namely the solvability and uniqueness of $\eqref{eq de base}$ in the class $\mathcal{C}^{\frac{\alpha}{2},1+\alpha}(\mathcal{J}^a_T)\cap \mathcal{C}_b^{1+\frac{\alpha}{2},2+\alpha}(\overset{\circ}{\mathcal{J}^{a}_T})$.\textbf{ In the rest of these notes, we fix $\alpha \in(0,1)$}.

We introduce the following data 
$$\begin{cases}
F\in \mathcal{C}^0(\R^I\times\R^I,\R)\\
g\in\mathcal{C}^{1}(\mathcal{J}^a)\cap\mathcal{C}_b^{2}(\overset{\circ}{\mathcal{J}^{a}})
\end{cases},$$
and for each $i\in\{1\ldots I\}$
$$\begin{cases}
\sigma_i \in \mathcal{C}^{1}([0,a_i]\times \R,\R)\\
H_i \in \mathcal{C}^1([0,a_i]\times \R^2,\R)\\ 
\phi_i \in \mathcal{C}^1([0,T],\R)
\end{cases}.$$
We suppose furthermore that the data satisfy the following assumption
$$\textbf{Assumption } (\mathcal{P})$$
(i) Assumption on $F$
$$\begin{cases}
a)~~F \text{ is decreasing with respect to its first variable,} \\
b)~~F \text{ is nondecreasing with respect to its second variable,}\\
c)~~\exists(b,B)\in\R^I\times\R^I,~~F(b,B)=0,
\end{cases}$$
or satisfies the Kirchhoff condition
$$\begin{cases}
a)~~F \text{ is nonincreasing with respect to its first variable},\\
b)~~F \text{ is increasing with respect to its second variable,}\\
c)~~\exists(b,B)\in\R^I\times\R^I,~~F(b,B)=0.
\end{cases}$$
We suppose moreover that there exists a parameter $m\in \R$, $m\ge 2$ such that we have\\
(ii) The (uniform) ellipticity condition on the $(\sigma_i)_{i\in\{1\ldots I\}}$ : there exist $\underline{\nu},\overline{\nu}$, strictly positive constants such that:
\begin{eqnarray*}
\forall i\in \{1\ldots I\},~~\forall (x,p)\in [0,a_i]\times\R,\\
\underline{\nu}(1+|p|)^{m-2}~~\leq~~\sigma_i(x,p)~~\leq~~\overline{\nu}(1+|p|)^{m-2}.
\end{eqnarray*}
(iii) The growth of the $(H_i)_{i\in\{1\ldots I\}}$ with respect to $p$ exceed the growth of the $\sigma_i$ with respect to $p$ by no more than two, namely there exists $\mu$ an increasing real continuous function such that:
\begin{eqnarray*}
\forall i\in \{1\ldots I\},~~\forall (x,u,p)\in [0,a_i]\times\R^2,~~|H_i(x,u,p)|~~\leq~\mu(|u|)(1+|p|)^{m}.
\end{eqnarray*}
(iv) We impose the following restrictions on the growth with respect to $p$ of the derivatives for the coefficients $(\sigma_i,H_i)_{i\in\{1\ldots I\}}$, which are for all $i\in \{1\ldots I\}$:
\begin{eqnarray*}
&a)~~|\partial_p\sigma_i|_{[0,a_i]\times\R^2}(1+|p|)^2+|\partial_pH_i|_{[0,a_i]\times\R^2}~~\leq~~\gamma(|u|)(1+|p|)^{m-1},\\
&b)~~|\partial_x\sigma_i|_{[0,a_i]\times\R^2}(1+|p|)^2+|\partial_xH_i|_{[0,a_i]\times\R^2}~~\leq~~\Big(\varepsilon(|u|)+P(|u|,|p|)\Big)(1+|p|)^{m+1},\\
&c)~~\forall (x,u,p)\in [0,a_i]\times\R^3,~~-C_H~~\leq~~\partial_uH_i(x,u,p)~~\leq~~\Big(\varepsilon(|u|)+P(|u|,|p|)\Big)(1+|p|)^{m},
\end{eqnarray*}
where $\gamma$ and $\varepsilon$ are continuous non negative increasing functions. $P$ is a continuous function, increasing with respect to its first variable, and tends to $0$ for $p\to +\infty$, uniformly with respect to its first variable, from $[0,u_1]$ with $u_1\in R$, and $C_H>0$ is real strictly positive number. We assume that $(\gamma,\varepsilon,P,C_H)$ are independent of $i\in \{1\ldots I\}$.\\
(v) A compatibility conditions for $g$ and $(\phi_i)_{\{1\ldots I\}}$:
\begin{eqnarray*}
&F(g(0),\partial_xg(0))~~=~~0~~;~~\forall i\in \{1\ldots I\},~~g_{i}(a_i)=\phi_i(0).
\end{eqnarray*}
\begin{Theorem}\label{th : exis para}
Assume $({\mathcal P})$. Then system \eqref{eq de base} is uniquely solvable in the class $\mathcal{C}^{\frac{\alpha}{2},1+\alpha}(\mathcal{J}^a_T)\cap \mathcal{C}_b^{1+\frac{\alpha}{2},2+\alpha}(\overset{\circ}{\mathcal{J}^{a}_T})$.
There exist constants $(M_1,M_2,M_3)$, depending only the data introduced in assumption $(\mathcal{P})$,
\begin{eqnarray*}
&M_1=M_1\Big(\max_{i\in\{1\ldots I\}}\Big\{~~\sup_{x\in(0,a_i)}|-\sigma_i(x,\partial_xg_i(x))\partial_x^2g_i(x)+H_{i}(x,g_i(x),\partial_xg_i(x))|+\\
&|\partial_t\phi_i|_{(0,T)}~~\Big\},\max_{i\in\{1\ldots I\}} |g_i|_{(0,a_i)},C_H\Big),\\
&M_2=M_2\Big(\overline{\nu},\underline{\nu},\mu(M_1),\gamma(M_1),\varepsilon(M_1),\sup_{|p|\ge 0}P(M_1,|p|),\max_{i\in\{1\ldots I\}}|\partial_xg_i|_{(0,a_i)},M_1\Big),
\\
&M_3=M_3\Big(M_1,\underline{\nu}(1+|p|)^{m-2},\mu(|u|)(1+|p|)^m,~~|u|\leq M_1,~~|p|\leq M_2\Big),\\
\end{eqnarray*}
such that
\begin{eqnarray*}
&||u||_{\mathcal{C}(\mathcal{J}_T^a)}\leq M_1,~~||\partial_xu||_{\mathcal{C}(\mathcal{J}_T^a)}\leq M_2,~~||\partial_tu||_{\mathcal{C}(\mathcal{J}_T^a)}\leq M_1,~~||\partial^2_{x}u||_{\mathcal{C}(\mathcal{J}_T^a)}\leq M_3.
\end{eqnarray*}
Moreover, there exists a constant $M(\alpha)$ depending on $\Big(\alpha,M_1,M_2,M_3\Big)$ such that 
\begin{eqnarray*}
&||u||_{\mathcal{C}^{\frac{\alpha}{2},1+\alpha}(\mathcal{J}_T^a)}\leq M(\alpha).
\end{eqnarray*}
\end{Theorem}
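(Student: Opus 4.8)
The plan is to build the solution by an implicit (backward) Euler discretization in time that reduces each step to the stationary problem \eqref{eq base 2}, and then to obtain the asserted bounds as limits of estimates that are uniform in the time-step. Fix $N\in\mathbb{N}^*$, set $h=T/N$, $t_n=nh$, and $u^0=g$. Inductively I would define $u^n$ as the $\mathcal{C}^{2+\alpha}$-solution, furnished by the elliptic theory of Section~\ref{sec review elliptic problem}, of
\[
\tfrac{1}{h}\big(u_i^n-u_i^{n-1}\big)-\sigma_i(x,\partial_xu_i^n)\partial_{x,x}^2u_i^n+H_i(x,u_i^n,\partial_xu_i^n)=0,\quad x\in(0,a_i),
\]
together with $F(u^n(0),\partial_xu^n(0))=0$ and $u_i^n(a_i)=\phi_i(t_n)$. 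This is precisely \eqref{eq base 2} for the modified Hamiltonian $\widetilde H_i(x,u,p)=H_i(x,u,p)+u/h$, whose extra term $u/h$ is increasing in $u$ and hence only reinforces the structural hypotheses $(\mathcal{P})$; in particular $\partial_u\widetilde H_i\ge -C_H$ is preserved, so the elliptic existence and uniqueness result applies and yields a well-defined sequence $(u^n)_{0\le n\le N}$.

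The heart of the matter is a family of a priori bounds on $(u^n)$ uniform in $h$. A discrete comparison argument first gives $\|u^n\|_{\mathcal{C}(\mathcal{J}^a)}\le M_1$: competing with constants and exploiting that $F$ is decreasing in $u$ (or Kirchhoff), that $\partial_uH_i\ge -C_H$, and the compatibility condition, one controls $u^n$ on each edge through its Dirichlet datum and the vertex relation. The delicate step is the uniform bound on the discrete time derivative $w^n:=\tfrac1h(u^n-u^{n-1})$. Subtracting consecutive equations and using the mean value theorem, $w^n$ solves a linear discrete-parabolic problem $\tfrac1h(w^n-w^{n-1})-\sigma_i\partial_{x,x}^2w^n+b_i\partial_xw^n+c_iw^n=0$ with zeroth-order coefficient $c_i=\partial_uH_i\ge -C_H$, so a maximum of $w^n$ over $\mathcal{J}^a$ is attained at $x=a_i$, where $w_i^n=\tfrac1h(\phi_i(t_n)-\phi_i(t_{n-1}))$ is bounded by $|\partial_t\phi_i|$, in the interior of an edge (controlled by a Gronwall factor $e^{C_HT}$), or at the vertex. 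The vertex is the crux: differencing $F(u^n(0),\partial_xu^n(0))=0$ gives $\partial_uF\,w^n(0)+\sum_i\partial_{p_i}F\,\partial_xw_i^n(0)=0$, and if $w^n$ had a positive interior maximum at $0$ then $\partial_xw_i^n(0)\le0$ for every $i$, so with $\partial_{p_i}F\ge0$ and $\partial_uF\le0$ the two sides carry incompatible strict signs, ruling this out. The base case $w^1$ is controlled by the initial residual $\sup_x|{-}\sigma_i(x,\partial_xg_i)\partial_{x,x}^2g_i+H_i(x,g_i,\partial_xg_i)|$, which is exactly the $t=0$ value of $\partial_tu$; this yields $\|w^n\|_{\mathcal{C}(\mathcal{J}^a)}\le M_1$.

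With the forcing $w^n$ now uniformly bounded, each $u^n$ solves a stationary equation with controlled right-hand side, so the elliptic gradient estimate of Section~\ref{sec review elliptic problem}—which under $(\mathcal{P})$(ii)--(iv) depends only on $\overline\nu,\underline\nu$, on $\mu,\gamma,\varepsilon,P$ evaluated at $M_1$, and on $|\partial_xg_i|$—gives $\|\partial_xu^n\|_{\mathcal{C}(\mathcal{J}^a)}\le M_2$; inverting the equation via the uniform ellipticity (ii), $\partial_{x,x}^2u_i^n=\sigma_i^{-1}(w^n+H_i)$ is bounded by $M_3$. These four bounds being uniform in $h$, I form the piecewise-linear-in-time interpolant $u^h$ and invoke Arzel\`a--Ascoli: $u^h$ is equicontinuous (space via $M_2$, time via $M_1$) and $\partial_xu^h$ is equicontinuous (space via $M_3$, time via Lemma~\ref{lm : cont deruiv temps} fed with the $M_1$ and $M_3$ bounds), so a subsequence converges in $\mathcal{C}^{0,1}(\mathcal{J}^a_T)$ to a limit $u$. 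Passing to the limit in the scheme identifies $u$ as a solution satisfying all four pointwise bounds; interior parabolic Schauder estimates applied edge-wise on compact subsets away from the vertex upgrade it to $\mathcal{C}_b^{1+\frac{\alpha}{2},2+\alpha}(\overset{\circ}{\mathcal{J}^a_T})$.

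To obtain the global parabolic H\"older bound I would apply Lemma~\ref{lm : cont deruiv temps} on each edge: the $M_1$ bound makes $u$ Lipschitz in time (exponent $1$) and the $M_3$ bound makes $\partial_xu$ Lipschitz in space (exponent $\gamma=1$), whence the lemma gives a time-H\"older modulus for $\partial_xu$ of exponent $\tfrac{1\cdot1}{1+1}=\tfrac12$, which on $[0,T]$ dominates the required $\tfrac{\alpha}{2}$; together with the spatial $\alpha$-H\"older control of $\partial_xu$ from $M_3$ and the time-H\"older control of $u$ from $M_1$, this furnishes $\|u\|_{\mathcal{C}^{\frac{\alpha}{2},1+\alpha}(\mathcal{J}^a_T)}\le M(\alpha)$ with $M(\alpha)$ depending only on $\alpha,M_1,M_2,M_3$. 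Uniqueness follows from the parabolic comparison principle, Theorem~\ref{th : para comparison th}, applied to two solutions in the stated class. The principal obstacle is exactly the uniform bound on $\partial_tu$ at the vertex—the very quantity the introduction flags as uncontrollable for the dynamical-condition approach—recovered here by exploiting the monotonicity of $F$ in the discrete maximum principle for $w^n$.
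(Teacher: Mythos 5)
Your overall architecture coincides with the paper's: implicit Euler discretization reducing to the elliptic junction problem, uniform bounds on the discrete time derivative, gradient and second-derivative bounds, piecewise-linear interpolation plus Ascoli and the interpolation Lemma \ref{lm : cont deruiv temps}, interior parabolic regularity on each edge, and uniqueness via Theorem \ref{th : para comparison th}. Two of your estimate steps, however, have genuine gaps. The more serious one is the gradient bound. You argue that once $w^n=\tfrac1h(u^n-u^{n-1})$ is bounded, $u^n$ ``solves a stationary equation with controlled right-hand side'' and the elliptic gradient estimate applies. The boundary (barrier) part of that estimate indeed only needs $\|w^n\|_\infty$, but the interior gradient maximum principle for quasilinear equations (Theorem 15.2 of Gilbarg--Trudinger, which is what the structure conditions $(\mathcal{P})$(iv) are tailored to) requires control of the $x$-derivative of the zeroth-order term: the relevant quantity is $\delta_i$ applied to $H^n_{i,k}(x,u,p)=n(u-u_{i,k-1}(x))+H_i(x,u,p)$, which produces the extra term $n(p-\partial_xu_{i,k-1}(x))/p$. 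Treating $w^n$ as a frozen forcing $f(x)$ makes $\partial_xf=n(\partial_xu^n-\partial_xu^{n-1})$ appear, and this is not uniformly bounded in $n$. The paper's Lemma \ref{lm : borne de grad u} resolves this by keeping the term inside the Hamiltonian and observing that $n(p-\partial_xu_{i,k-1}(x))/p\ge0$ as soon as $|p|\ge|\partial_xu_{i,k-1}(x)|$, so that the function $G^n_{i,k}$ in the maximum principle is nonpositive for $|\partial_xu|\ge L\vee|\partial_xu_{i,k-1}|$; this yields the recursion $|\partial_xu_k|\le\max(\cdots,|\partial_xu_{k-1}|)$, which closes by induction starting from $|\partial_xg|$. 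Without this structural observation your $M_2$ bound is not justified.

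The second gap is at the vertex in the bound on $w^n$: you difference the relation $F(u^n(0),\partial_xu^n(0))=0$ and write $\partial_uF\,w^n(0)+\sum_i\partial_{p_i}F\,\partial_xw^n_i(0)=0$, but assumption $(\mathcal{P})$ only gives $F\in\mathcal{C}^0$, so these derivatives need not exist. The sign argument can be salvaged using monotonicity alone in the strictly decreasing case ($w^n(0)>0$ and $\partial_xw^n_i(0)\le0$ for all $i$ give $0=F(u^n(0),\partial_xu^n(0))\le F(u^n(0),\partial_xu^{n-1}(0))<F(u^{n-1}(0),\partial_xu^{n-1}(0))=0$), but in the Kirchhoff case one needs a strict gradient inequality at the vertex, which a bare maximum does not provide; the paper's comparison proofs manufacture this strictness with an $e^{x}$ weight. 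The paper avoids the whole issue in Lemma \ref{lm: esti derivée temps} by exhibiting $u_{k-1}\pm M_{k,n}/n$ as super/sub solutions of the $k$-th elliptic problem and invoking the elliptic comparison Theorem \ref{th: Ellip Th Comp} as a black box, with $M_{k,n}=\tfrac{n}{n-C_H}M_{k-1,n}$ producing your Gronwall factor $e^{C_H T}$; the vertex condition for these competitors follows directly from the monotonicity of $F$ in $u$ and the compatibility condition, with no differencing of $F$ required. I would also note that your preliminary sup bound on $u^n$ ``by competing with constants'' does not work as stated (a constant barrier requires $H_i(x,C,0)\ge0$, which is not assumed); the correct order, as in the paper, is to bound $w^n$ first and then integrate in the discrete time variable. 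The remaining steps (interpolation, weak passage to the limit in the nonlinear terms, the linear parabolic upgrade via Theorem III.12.2 of \cite{pde para}, the vertex condition by continuity of $F$, and uniqueness) match the paper and are fine modulo the usual details.
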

We continue this Section by giving the definitions of super and sub solution, and stating a comparison Theorem for our problem. 
\begin{Definition}\label{def : sur/sous solutions}
We say that $u\in \mathcal{C}^{0,1}(\mathcal{J}^a_T)\cap \mathcal{C}^{1,2}(\overset{\circ}{\mathcal{J}^a_T})$, is a super solution (resp. sub solution) of
\begin{eqnarray}\label{eq : Neumann par }
\begin{cases} \partial_tu_i(t,x)-\sigma_i(x,\partial_xu_i(t,x))\partial_x^2u_i(t,x)+\\
H_i(x,u_i(t,x),\partial_xu_i(t,x))~~=~~0,~~ \text{ if } (t,x) \in (0,T)\times (0,a_i),\\
F(u(t,0),\partial_x u(t,0))~~=~~0, ~~ \text{ if } t\in(0,T),
\end{cases}
\end{eqnarray}
if 
\begin{eqnarray*}
\begin{cases}
\partial_tu_i(t,x)-\sigma_i(x,\partial_xu_i(t,x))\partial_x^2u_i(t,x)+\\
H_i(x,u_i(t,x),\partial_xu_i(t,x))~~\ge ~~0,~~(\text{resp.} \leq 0),~~\forall (t,x) \in (0,T)\times(0,a_i),\\
F(u(t,0),\partial_x u(t,0))~~\leq~~0,~~(\text{resp.} \ge 0),~~\forall t\in (0,T)\\
\end{cases}
\end{eqnarray*}
\end{Definition}
\begin{Theorem}\label{th : para comparison th} Parabolic comparison.
~\\
Assume $(\mathcal{P})$. Let $u\in \mathcal{C}^{0,1}(\mathcal{J}^a_T)\cap \mathcal{C}^{1,2}_b(\overset{\circ}{\mathcal{J}^a_T})$ (resp. $v\in \mathcal{C}^{0,1}(\mathcal{J}^a_T)\cap \mathcal{C}^{1,2}_b(\overset{\circ}{\mathcal{J}^a_T})$) a super solution (resp. a sub solution) of \eqref{eq : Neumann par }, satisfying for all $i\in\{1\ldots I\}$, $u_i(t,a_i)\ge v_i(t,a_i)$, for all $t\in [0,T]$, and $u_i(0,x)\ge v_i(0,x)$, for all $x\in [0,a_i]$.\\
Then for each $(t,(x,i))\in\mathcal{J}^a_T$ : $u_i(t,x)\ge v_i(t,x)$.
\end{Theorem}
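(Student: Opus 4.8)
The plan is to argue by contradiction with the classical maximum principle on the junction, combined with a penalisation in time and an exponential weight that absorbs the one–sided bound $\partial_uH_i\ge -C_H$ from $(\mathcal P)(iv)(c)$. Set $w=v-u$; by hypothesis $w\le 0$ on the parabolic boundary, i.e.\ at $t=0$ and at the endpoints $x=a_i$, and the goal is $w\le0$ on all of $\mathcal{J}^a_T$. Suppose instead $\max_{\mathcal{J}^a_T}w>0$. I would fix $\lambda>C_H$ and, for small $\eta>0$, consider
\[
\Phi(t,X)=e^{-\lambda t}w(t,X)-\frac{\eta}{T-t},\qquad W:=e^{-\lambda t}w .
\]
Since $w$ is bounded, $\Phi\to-\infty$ as $t\to T^-$; since $w\le 0$ forces $\Phi<0$ on $\{t=0\}$ and on each $\{x=a_i\}$; and since $\sup\Phi>0$ for $\eta$ small, $\Phi$ attains a positive maximum at some $(t_0,X_0)$ with $t_0\in(0,T)$ and $X_0$ either interior to an edge or at the vertex $0$, where necessarily $w(t_0,X_0)>0$.

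Next I would linearise on each edge. Writing $\sigma_i(x,\partial_xv_i)-\sigma_i(x,\partial_xu_i)=b_i\,\partial_xw_i$ and $H_i(x,v_i,\partial_xv_i)-H_i(x,u_i,\partial_xu_i)=c_i\,w_i+d_i\,\partial_xw_i$ with $b_i,c_i,d_i$ the usual integral mean coefficients (all finite because $u,v\in\mathcal{C}^{1,2}_b$ and $\sigma_i,H_i\in\mathcal C^1$, with $c_i\ge -C_H$), the difference of the super/sub solution inequalities gives on each open edge the linear parabolic inequality
\[
\partial_tw_i-A_i\,\partial_{xx}w_i-B_i\,\partial_xw_i+c_i\,w_i\le 0,
\]
where $A_i=\sigma_i(x,\partial_xv_i)$ is uniformly elliptic and bounded by $(\mathcal P)(ii)$ and $B_i$ is bounded. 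If $X_0$ is interior to an edge, the conditions $\partial_t\Phi=0$, $\partial_x\Phi=0$, $\partial_{xx}\Phi\le0$ yield $\partial_xw_i=0$, $\partial_{xx}w_i\le0$, and $\partial_tw_i=\lambda w_i+e^{\lambda t_0}\eta(T-t_0)^{-2}$; inserting these and using $c_iw_i\ge -C_Hw_i$ (as $w_i>0$) forces $(\lambda-C_H)w_i+e^{\lambda t_0}\eta(T-t_0)^{-2}\le0$, impossible since $\lambda>C_H$. Hence the maximum occurs only at the vertex.

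It remains to treat $X_0=0$. Since $(t_0,0)$ maximises $\Phi$ in space and the time penalty is $x$–independent, each $x\mapsto w_i(t_0,x)$ is maximal at $x=0$, so $\partial_xw_i(t_0,0)\le0$, i.e.\ $\partial_xv(t_0,0)\le\partial_xu(t_0,0)$ in $\R^I$, while $v(t_0,0)>u(t_0,0)$. Combining the vertex inequalities $F(v(t_0,0),\partial_xv(t_0,0))\ge0\ge F(u(t_0,0),\partial_xu(t_0,0))$ with the monotonicity of $F$, the decreasing case ($F$ decreasing in $u$, nondecreasing in $p$) produces the strict chain $0\le F(v,\partial_xv)\le F(v,\partial_xu)<F(u,\partial_xu)\le0$, a contradiction, and finishes that case. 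In the Kirchhoff case ($F$ nonincreasing in $u$, increasing in $p$) the analogous chain is only non–strict and forces equality throughout; strict monotonicity in $p$ then rules out $\partial_xv(t_0,0)<\partial_xu(t_0,0)$, so that $\partial_xw_i(t_0,0)=0$ for every $i$.

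The Kirchhoff sub–case is the main obstacle, and I expect to resolve it with the parabolic Hopf (boundary point) lemma rather than the plain maximum principle. The key observation is that the penalisation already supplies the strict interior inequality it needs: for every $(t,x)$ with $t<t_0$ one has $W(t,x)=\Phi(t,x)+\eta(T-t)^{-1}\le \max\Phi+\eta(T-t)^{-1}<\max\Phi+\eta(T-t_0)^{-1}=W(t_0,0)$, and likewise $W(t_0,x)<W(t_0,0)$ for $x>0$ on that edge (no interior maximum). Moreover $W=e^{-\lambda t}w$ satisfies $\partial_tW-A_i\partial_{xx}W-B_i\partial_xW+(c_i+\lambda)W\le0$ with strictly positive zeroth–order coefficient $c_i+\lambda>0$, so on a parabolic rectangle $(t_0-\tau,t_0]\times[0,\delta)$ the boundary point lemma applies at the lateral maximum $(t_0,0)$ and yields $\partial_xW(t_0,0)<0$, hence $\partial_xw_i(t_0,0)<0$, contradicting $\partial_xw_i(t_0,0)=0$. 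As the contradiction is reached for each admissible $\eta$, no passage to the limit is needed, and we conclude $w\le0$, i.e.\ $u\ge v$ on $\mathcal{J}^a_T$.
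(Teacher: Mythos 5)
Your proof is correct, but it follows a genuinely different route from the paper's at the decisive step. The paper multiplies $v-u$ by the spatial weight $e^{x}$ (its test function is $e^{-\lambda t+x}(v_i-u_i)$); the whole point of that weight is that the first--order Taylor expansion at a vertex maximum yields the \emph{strict} componentwise inequality $\partial_x v_i(t_0,0)\le \partial_x u_i(t_0,0)-(v-u)(t_0,0)<\partial_x u_i(t_0,0)$, which, combined with $v(t_0,0)>u(t_0,0)$, makes $F(v(t_0,0),\partial_xv(t_0,0))<F(u(t_0,0),\partial_xu(t_0,0))$ strict under \emph{either} alternative of assumption $(\mathcal P)$(i), so both cases are dispatched at once by elementary means. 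You drop the spatial weight, obtain only $\partial_xv(t_0,0)\le\partial_xu(t_0,0)$, and must therefore split the cases: the decreasing-in-$u$ case closes immediately, while the Kirchhoff case forces equality of the gradients and you exclude that with the parabolic Hopf boundary point lemma. That application is legitimate: you have verified strictness of the maximum of $W$ on the parabolic rectangle (via the monotonicity of $\eta/(T-t)$ in $t$ and the already-excluded interior maximum), the linearized coefficients are bounded with $A_i\ge\underline\nu$ because $u,v\in\mathcal C^{0,1}(\mathcal J^a_T)\cap\mathcal C^{1,2}_b(\overset{\circ}{\mathcal J^a_T})$, and the zeroth-order coefficient $c_i+\lambda$ is positive at a positive maximum since $c_i\ge -C_H$ and $\lambda>C_H$. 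The trade-offs: your linearization lets you take any $\lambda>C_H$, whereas the paper needs $\lambda>C_1+C_2$ with constants involving $\sup|\partial^2_{x,x}u|$; your time penalization $\eta/(T-t)$ replaces the paper's restriction to $[0,s]$, $s<T$, followed by a limit (equivalent devices); but you import an external lemma (the boundary point lemma) where the paper's exponential weight keeps the argument self-contained and uniform over both monotonicity assumptions on $F$.
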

\begin{proof}
We start by showing that for each $0\leq s<T$, for all $(t,(x,i))\in \mathcal{J}^a_s$, $u_i(t,x)\ge v_i(t,x)$.\\
Let $\lambda>0$. Suppose that $\lambda > C_1+C_2$, where the expression of the constants $(C_1,C_2)$ are given in the sequel (see \eqref{eq : 1cont comp}, and \eqref{eq : 2const compa}).
We argue by contradiction assuming that
\begin{eqnarray*}
\sup_{(t,(x,i))\in \mathcal{J}^a_{s}} \exp(-\lambda t+x)\Big(v_{i}(t,x)-u_{i}(t,x)\Big)>0.
\end{eqnarray*}
Using the boundary conditions satisfied by $u$ and $v$, the supremum above is reached at a point $(t_0,(x_0,j_0))\in (0,s]\times \mathcal{J}$, with $0\leq x_0<a_{j_0}$.

Suppose first that $x_0>0$, the optimality conditions imply that 
\begin{eqnarray*}
&\exp(-\lambda t_0+x_0)\Big(-\lambda(v_{j_0}(t_0,x_0)-u_{j_0}(t_0,x_0))~~+~~\partial_tv_{j_0}(t_0,x_0)-\partial_tu_{j_0}(t_0,x_0)\Big)~~\ge~~0,\\
&\exp(-\lambda t_0+x_0))\Big(v_{j_0}(t_0,x_0)-u_{j_0}(t_0,x_0)~~+~~\partial_xv_{j_0}(t_0,x_0)-\partial_xu_{j_0}(t_0,x_0)\Big)~~=~~0,\\
&
\exp(-\lambda t_0+x_0)\Big(v_{j_0}(t_0,x_0)-u_{j_0}(t_0,x_0)~~+~~2\Big(\partial_xv_{j_0}(t_0,x_0)-\partial_xu_{j_0}(t_0,x_0)\Big)\\
&+~~\Big(\partial_x^2v_{j_0}(t_0,x_0)-\partial_x^2u_{j_0}(t_0,x_0)\Big)\Big)~~=\\
&\exp(-\lambda t_0+x_0)\Big(-\Big(v_{j_0}(t_0,x_0)-u_{j_0}(t_0,x_0)\Big)~~~+~~\partial_x^2v_{j_0}(t_0,x_0)-\partial_x^2u_{j_0}(t_0,x_0)\Big)~~\leq~~0.
\end{eqnarray*}
Using assumptions $(\mathcal{P})$ (iv) a), (iv) c) and the optimality conditions above we have
\begin{eqnarray*}
&H_{j_0}(x_0,u_i(t_0,x_0),\partial_xu_{j_0}(t_0,x_0))-H_{j_0}(x_0,v_{j_0}(t_0,x_0),\partial_xv_{j_0}(t_0,x_0))~~\leq\\
&\Big(v_{j_0}(t_0,x_0)-u_{j_0}(t_0,x_0)\Big)\Big(C_H+\gamma(|\partial_xv_{j_0}(t_0,x_0)|)\Big)\Big((1+|\partial_xu_{j_0}(t_0,x_0))|\vee|\partial_xv_{j_0}(t_0,x_0))|)^{m-1}\Big)\\
&\leq~~C_1\Big(v_{j_0}(t_0,x_0)-u_{j_0}(t_0,x_0)\Big),
\end{eqnarray*}
where 
\begin{eqnarray}\label{eq : 1cont comp}
\nonumber  &C_1~~:=~~
\max_{i\in\{1\ldots I\}}~~\Big\{~~\sup_{(t,x)\in[0,T]\times[0,a_i]}\Big\{~~\Big(C_H+
\gamma(|\partial_xv_{i}(t,x)|\Big)\Big(1+|\partial_xu_{i}(t,x))|\\
&\vee|\partial_xv_{i}(t,x))|\Big)^{m-1}~~\Big\}~~\Big\}.
\end{eqnarray}
On the other hand we have using assumption $(\mathcal{P})$ (ii), (iv) a), (iv) c), and the optimality conditions
\begin{eqnarray*}
&\sigma_{j_0}(x_0,\partial_xv_{j_0}(t_0,x_0))\partial_x^2v_{j_0}(t_0,x_0)-\sigma_{j_0}(x_0,\partial_xu_{j_0}(t_0,x_0))\partial_x^2u_{j_0}(t_0,x_0)~~\leq\\
&\Big(v_{j_0}(t_0,x_0)-u_{j_0}(t_0,x_0)\Big)
\Big(~~\overline{\nu}(1+|\partial_xv_{j_0}(t_0,x_0)|)^{m-2}~~+~~\Big|\partial_x^2u_{j_0}(t_0,x_0)\Big|\\
&+~~\gamma(|\partial_xu_{j_0}(t_0,x_0)|)(1+|\partial_xu_{j_0}(t_0,x_0))|\vee|\partial_xv_{j_0}(t_0,x_0))|)^{m-1}~~\Big)\\
&\leq~~C_2\Big(v_{j_0}(t_0,x_0)-u_{j_0}(t_0,x_0)\Big),
\end{eqnarray*}
where 
\begin{eqnarray}\label{eq : 2const compa}
&\nonumber C_2~~:=~~\max_{i\in\{1\ldots I\}}~~\Big\{~~\sup_{(t,x)\in[0,T]\times[0,a_i]}\Big\{~~\overline{\nu}(1+|\partial_xv_{i}(t,x)|)^{m-2}+\Big|\partial_x^2u_{i}(t,x)\Big|\\
&+~~\gamma(|\partial_xu_{i}(t,x)|)(1+|\partial_xu_{i}(t,x))|+|\partial_xv_{i}(t,x))|)^{m-1}~~\Big\}~~\Big\}.
\end{eqnarray}
Using now the fact that $v$ is a sub-solution while $u$ is a super-solution, we get
\begin{eqnarray*}
& 0 \leq \\
& \partial_tu_{j_0}(t_0,x_0) -\sigma_{j_0}(x_0,\partial_xu_{j_0}(t_0,x_0))\partial_x^2u_{j_0}(t_0,x_0)+H_{j_0}(x_0,u_i(t_0,x_0),\partial_xu_{j_0}(t_0,x_0))\\
&-\partial_tv_{j_0}(t_0,x_0) +\sigma_{j_0}(x_0,\partial_xv_{j_0}(t_0,x_0))\partial_x^2v_{j_0}(t_0,x_0)-H_{j_0}(x_0,v_{j_0}(t_0,x_0),\partial_xv_{j_0}(t_0,x_0))\\
&\leq -(\lambda-(C_1+C_2))(v_{j_0}(t_0,x_0)-u_{j_0}(t_0,x_0))~~<~~0,
\end{eqnarray*}
which is a contradiction.
Therefore the supremum is reached at $(t_0,0)$, with $t_0\in(0,s]$.
We apply a first order Taylor expansion in space, in the neighborhood of the junction point $0$. Since for all $(i,j)\in\{1\ldots I\}$, $u_i(t_0,0)=u_j(t_0,0)$, and $v_i(t_0,0)=v_j(t_0,0)$, we get from
\begin{eqnarray*}
&\forall (i,j)\in\{1,\ldots I\}^2,~~\forall h\in(0,\min_{i\in\{1\ldots I\}}a_i]\\
&v_j(t_0,0)-u_j(t_0,0)~~\ge~~\exp(h)\Big(v_i(t_0,h)-u_i(t_0,h)\Big),
\end{eqnarray*}
that
\begin{eqnarray*}
&\forall (i,j)\in\{1,\ldots I\}^2,~~\forall h\in(0,\min_{i\in\{1\ldots I\}}a_i] \\
&v_j(t_0,0)-u_j(t_0,0)~~\ge~~v_i(t_0,0)-u_i(t_0,0)~~+~~\\
&h\Big(~~v_i(t_0,0)-u_i(t_0,0)~~+~~\partial_xv_i(t_0,0)-\partial_xu_i(t_0,0)~~\Big)~~+~~h\varepsilon_i(h),
\end{eqnarray*}
where 
\begin{eqnarray*}
&\forall i\in\{1,\ldots I\},~~\lim_{h\to 0}\varepsilon_i(h)=0.
\end{eqnarray*}
We get then
\begin{eqnarray*}
\forall i\in\{1,\ldots I\},~~\partial_xv_i(t_0,0) ~~\leq ~~\partial_xu_i(t_0,0)-\Big(v_i(t_0,0)-u_i(t_0,0)\Big)~~<~~\partial_xu_i(t_0,0).
\end{eqnarray*}
Using the growth  assumptions on $F$ (assumption $(\mathcal{P})$(i)), and the fact that $v$ is a sub-solution while $u$ is a super-solution, we get
\begin{eqnarray*}
0~~\leq~~F(v(t_0,0),\partial_x v(t_0,0)) ~~ < ~~F(u(t_0,0),\partial_x u(t_0,0))~~\leq~ 0,
\end{eqnarray*}
and then a contradiction.

We deduce then for all $0\leq s<T$, for all $(t,(x,i))\in[0,s]\times\mathcal{J}^a$, 
\begin{eqnarray*}
\exp(-\lambda t+x)\Big(v_{i}(t,x)-u_{i}(t,x)\Big)~~\leq~~0.
\end{eqnarray*}
Using the continuity of $u$ and $v$, we deduce finally that for all $(t,(x,i))\in[0,T]\times\mathcal{J}^a$,
\begin{eqnarray*}
v_{i}(t,x)~~\leq~~u_{i}(t,x).
\end{eqnarray*}
\end{proof}
\section{The elliptic problem}\label{sec review elliptic problem}
As explained in the introduction, the construction of a solution for our parabolic problem \eqref{eq de base} relies on a time discretization and on the solvability of the associated elliptic problem. We review in this section the well-posedness of the elliptic problem \eqref{eq base 2}:
\begin{eqnarray*}
\begin{cases}
-\sigma_i(x,\partial_x u_i(x))\partial_x^2u_i(x)+H_i(x,u_i(x),\partial_xu_i(x))~~=~~0,~~\text{if}~~x\in(0,a_i) \\
F(u(0),\partial_xu(0))~~=~~0,~~\text{with}~~\forall (i,j)\in\{1\ldots I\}^2,~~u_i(0)=u_j(0),\\
\text{and}~~u(0)=(u_1(0),\ldots,u_I(0)),~~\partial_x u(0)=(\partial_xu_1(0),\ldots,\partial_xu_I(0)),\\
\forall i\in\{1 \ldots I\},~~u_i(a_i)=\phi_i.
\end{cases}\eqref{eq base 2}
\end{eqnarray*}
We introduce the following data for $i\in \{1\ldots I\}$ 
$$\begin{cases}
F\in \mathcal{C}^0(\R^I\times \R^I,\R)\\
\sigma_i \in \mathcal{C}^{1}([0,a_i]\times \R,\R)\\
H_i \in \mathcal{C}^{1}([0,a_i]\times\R^2,\R)\\
\phi_i \in \R
\end{cases},$$
satisfying the following assumption
$$\textbf{Assumption }(\mathcal{E})$$ 
(i) Assumption on $F$
$$\begin{cases}
a)~~F \text{ is decreasing with respect to its first variable,}\\
b)~~F\text{ is nondecreasing with respect to its second variable},\\
c)~~\exists (b,B)\in\R^I\times \R^I,  \text{ such that : }F(b,B)~~=~~0,\\
\end{cases}$$
or $F$ satisfy the Kirchhoff condition
$$\begin{cases}
a)~~F \text{ is nonincreasing with respect to its first variable,}\\
b)~~F\text{ is increasing with respect to its second variable},\\
c)~~\exists (b,B)\in\R^I\times \R^I,  \text{ such that : }F(b,B)~~=~~0.\\
\end{cases}$$
(ii) The ellipticity condition on the $\sigma_i$
$$\exists c>0,~~\forall i\in \{1\ldots I\},~~\forall (x,p)\in [0,a_i]\times \R,~~\sigma_i(x,p)\ge c.$$
(iii) For the Hamiltonians $H_i$, we suppose
\begin{eqnarray*}
\exists C_H>0,~~\forall i\in \{1\ldots I\},~~\forall (x,u,v,p)\in (0,a_i)\times\R^3,&\\~~\text{ if } u\leq v,~~C_H(u-v)\leq H_i(x,u,p)-H_i(x,v,p).&
\end{eqnarray*}
For each $i\in \{1\ldots I\}$, we define the following differential operators $(\delta_i,\overline{\delta}_i)_{i\in \{1\ldots I\}}$ acting on $\mathcal{C}^{1}([0,a_i]\times \R^2,\R)$, for $f=f(x,u,p)$ by
\begin{eqnarray*}
\delta_i~~:=~~\partial_u+\frac{1}{p}\partial_x;~~\overline{\delta_i}~~:=~~p\partial_p.
\end{eqnarray*}
(iv) We impose the following restrictions on the growth with respect to $p$ for the coefficients $(\sigma_i,H_i)_{i\in\{1\ldots I\}}=(\sigma_i(x,p),H_i(x,u,p))_{i\in\{1\ldots I\}}$, which are for all $i\in \{1\ldots I\}$
\begin{eqnarray*}
\delta_i\sigma_i~~=~~o(\sigma_i),\\
\overline{\delta_i}\sigma_i~~=~~\mathcal{O}(\sigma_i),\\
H_i~~=~~\mathcal{O}(\sigma_ip^2),\\
\delta_i H_i~~\leq~~o(\sigma_ip^2),\\
\overline{\delta_i}H_i~~\leq~~\mathcal{O}(\sigma_ip^2),
\end{eqnarray*}
where the limits behind are understood as $p\to +\infty$, uniformly in $x$, for bounded $u$.

The main result of this section is the following Theorem, for the solvability and uniqueness of the elliptic problem posed on the junction, with non linear Neumann condition at the junction point.
\begin{Theorem}\label{th:exi elli}
Assume $(\mathcal{E})$. Then system \eqref{eq base 2}, is uniquely solvable in the class $\mathcal{C}^{2+\alpha}(\mathcal{J}^a)$.
\end{Theorem}
Theorem \ref{th:exi elli} is stated without proof in \cite{Lions Souganidis 1}. For the convenience of the reader, we sketch its proof in the Appendix.
The uniqueness of the solution of \eqref{eq base 2}, is a consequence of the elliptic comparison Theorem for smooth solutions, for the Neumann problem, stated in this Section, and whose proof uses the same arguments of the proof of the parabolic comparison Theorem \ref{th : para comparison th}.

We complete this section by recalling the definition of super and sub solution for the elliptic problem \eqref{eq base 2}, and the corresponding elliptic comparison Theorem.
\begin{Definition}
Let $u\in \mathcal{C}^{2}(\mathcal{J}^a)$. We say that $u$ is a super solution (resp. sub solution) of 
\begin{eqnarray}\label{eq : Neum1}\begin{cases}  -\sigma_i(x,\partial_x f_i(x))\partial_x^2f_i(x)+H_i(x,f_i(x),\partial_x f_i(x))~~=~~0, \text{ if } x \in (0,a_i),\\
F(f(0),\partial_x f(0))~~=~~0,\\
\end{cases}\end{eqnarray}
if 
\begin{eqnarray*}
\begin{cases}
-\sigma_i(x,\partial_x u_i(x))\partial_x^2u_i(x)+H_i(x,u_i(x),\partial_x u_i(x))~~\ge ~~0,~~(\text{resp.} \leq 0),~~\text{ if } x \in (0,a_i),\\
F(u(0),\partial_x u(0))~~\leq~~0,~~(\text{resp.} \ge 0).
\end{cases}
\end{eqnarray*}
\end{Definition}
\begin{Theorem}\label{th: Ellip Th Comp} Elliptic comparison Theorem, see for instance Theorem 2.1 of \cite{Lions Souganidis 1}.
~\\
Assume ($\mathcal{E}$). Let $u\in \mathcal{C}^{2}(\mathcal{J}^a)$ (resp. $v\in \mathcal{C}^{2}(\mathcal{J}^a)$) a super solution (resp. a sub solution) of \eqref{eq : Neum1}, satisfying for all $i\in\{1\ldots I\}$, $u_i(a_i)\ge v_i(a_i)$.
Then for each $(x,i)\in\mathcal{J}^a$ : $u_i(x)\ge v_i(x)$.
\end{Theorem}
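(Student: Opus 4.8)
The plan is to replay the proof of the parabolic comparison Theorem \ref{th : para comparison th} almost verbatim, simply suppressing the time variable, and to locate in the time-independent setting the two contradictions it produces: one in the interior of an edge, one at the vertex. Concretely, I would argue by contradiction, assuming
\[
\sup_{(x,i)\in\mathcal{J}^a}\exp(x)\bigl(v_i(x)-u_i(x)\bigr)>0 .
\]
Since $u_i(a_i)\ge v_i(a_i)$ for every $i$, this weighted supremum is attained at some $(x_0,j_0)$ with $0\le x_0<a_{j_0}$, and the whole difficulty is to exclude both $x_0>0$ and $x_0=0$.

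\emph{Interior case $x_0>0$.} The first- and second-order optimality conditions for $\exp(x)(v_{j_0}-u_{j_0})$ at $x_0$ read $\partial_x v_{j_0}(x_0)-\partial_x u_{j_0}(x_0)=-(v_{j_0}-u_{j_0})(x_0)$ and $\partial_{x,x}^2 v_{j_0}(x_0)-\partial_{x,x}^2 u_{j_0}(x_0)\le (v_{j_0}-u_{j_0})(x_0)$, exactly as in the parabolic argument. I would then subtract the supersolution inequality for $u$ from the subsolution inequality for $v$ along the edge $j_0$ and estimate the difference of the $\sigma$-terms and of the $H$-terms by $(v_{j_0}-u_{j_0})(x_0)$ times finite constants $C_1,C_2$, the elliptic analogues of \eqref{eq : 1cont comp}--\eqref{eq : 2const compa}; these are finite because $u,v\in\mathcal{C}^2(\mathcal{J}^a)$ makes $\partial_x u,\partial_x v,\partial_{x,x}^2 u,\partial_{x,x}^2 v$ bounded on the compact junction. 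The ellipticity $(\mathcal{E})$(ii) controls the diffusion difference and the growth bounds $(\mathcal{E})$(iv) control the first-order and $x$-dependence, while the coercivity that in the parabolic proof was supplied by the time-derivative term $-\lambda(v-u)$ is here furnished by the monotonicity of $H$ in its first variable, condition $(\mathcal{E})$(iii). Combining these should yield a strictly negative quantity and hence a contradiction, forcing the supremum to be attained at the vertex.

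\emph{Vertex case $x_0=0$.} Here I would reproduce the vertex step of Theorem \ref{th : para comparison th}: since $v_i(0)-u_i(0)=v(0)-u(0)$ is independent of $i$ and dominates $\exp(h)(v_i(h)-u_i(h))$ for small $h>0$ on every edge, a first-order Taylor expansion at $h=0^+$ gives, for each $i$,
\[
\partial_x v_i(0)\le \partial_x u_i(0)-\bigl(v(0)-u(0)\bigr)<\partial_x u_i(0),
\]
that is $\partial_x v(0)<\partial_x u(0)$ componentwise, together with $v(0)>u(0)$. Feeding these strict inequalities into the boundary relations $F(v(0),\partial_x v(0))\ge 0$ and $F(u(0),\partial_x u(0))\le 0$ satisfied by the sub/supersolution, and invoking the monotonicity of $F$ in $(\mathcal{E})$(i) --- $F$ decreasing in $u$ and nondecreasing in $p$ in the first alternative, $F$ nonincreasing in $u$ and increasing in $p$ in the Kirchhoff alternative --- I obtain in either case $0\le F(v(0),\partial_x v(0))<F(u(0),\partial_x u(0))\le 0$, the desired contradiction. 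Thus the weighted supremum is $\le 0$, and dividing by $\exp(x)$ gives $v_i\le u_i$ on all of $\mathcal{J}^a$.

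The main obstacle is the vertex analysis: one must extract the strict gradient inequality $\partial_x v(0)<\partial_x u(0)$ from the weighted optimality at the junction and then check that each of the two admissible monotonicity patterns for $F$ converts it into a strict violation of $F(v(0),\partial_x v(0))\ge 0\ge F(u(0),\partial_x u(0))$. A secondary but genuine subtlety is the interior step: unlike the parabolic case there is no time-derivative term to generate coercivity, so ruling out an interior maximum rests entirely on the proper structure of $H$ in $u$ encoded in $(\mathcal{E})$(iii), and one must verify that the constants $C_1,C_2$ are indeed finite under the $\mathcal{C}^2$-regularity of $u,v$ and the growth hypotheses $(\mathcal{E})$(ii),(iv).
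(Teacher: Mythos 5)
Your overall route --- transposing the proof of Theorem \ref{th : para comparison th} to the time-independent setting --- is exactly what the paper prescribes: it gives no separate proof of this theorem, stating only that it ``uses the same arguments'' as the parabolic comparison and citing \cite{Lions Souganidis 1}. Your treatment of the vertex case is correct: the weight $e^{x}$ yields $\partial_x v_i(0)\le \partial_x u_i(0)-(v(0)-u(0))<\partial_x u_i(0)$ together with $v(0)>u(0)$, and either monotonicity pattern for $F$ in $(\mathcal{E})$(i) then gives $0\le F(v(0),\partial_xv(0))<F(u(0),\partial_xu(0))\le 0$.

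The gap is in the interior case. With the fixed weight $e^{x}$ you arrive, as in the parabolic proof, at an inequality of the form $0\le (C_1+C_2)\,(v_{j_0}-u_{j_0})(x_0)-C_H\,(v_{j_0}-u_{j_0})(x_0)$, where $C_1,C_2$ are the finite analogues of \eqref{eq : 1cont comp}--\eqref{eq : 2const compa} and the term $-C_H(v-u)$ is what $(\mathcal{E})$(iii) supplies in place of the missing time-derivative. Finiteness of $C_1,C_2$ is not enough: you need $C_1+C_2<C_H$ to reach a contradiction, and nothing in $(\mathcal{E})$ guarantees that. In the parabolic proof this domination is manufactured by choosing $\lambda>C_1+C_2$ in the weight $e^{-\lambda t+x}$; your elliptic weight has no such free parameter. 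The standard repair is to use $e^{\beta x}$ with $\beta>0$ small: the optimality conditions then read $\partial_x(v-u)(x_0)=-\beta(v-u)(x_0)$ and $\partial_{x,x}^2(v-u)(x_0)\le \beta^{2}(v-u)(x_0)$, so the perturbation constants become of order $\beta+\beta^{2}$ and are beaten by $C_H$ once $\beta$ is small, while the vertex argument survives for every $\beta>0$. (Alternatively, in the first alternative of $(\mathcal{E})$(i), where $F$ is strictly decreasing in $u$, you may drop the weight entirely: at an unweighted interior maximum $\partial_xv=\partial_xu$ and $\partial_{x,x}^{2}v\le\partial_{x,x}^{2}u$, so the $\sigma$-terms and the gradient slots of $H$ cancel and $(\mathcal{E})$(iii) alone gives the contradiction; the exponential weight is only needed to produce the strict gradient inequality required by the Kirchhoff alternative.)
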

\section{The parabolic problem}\label{sec para probl}
In this Section, we prove Theorem \ref{th : exis para}.
The construction of the solution is based on the results obtained in Section \ref{sec review elliptic problem} for the elliptic problem, and is done by considering a sequence $u^n\in \mathcal{C}^{2}(\mathcal{J}^a)$, solving on a time grid an elliptic scheme defined by induction. We will prove that the solution $u^n$  converges to the required solution.
\subsection{Estimates on the discretized scheme}\label{sou sec estimation schema}
Let $n\in \mathbb{N}^*$, we consider the following time grid, $(t_k^n=\frac{kT}{n})_{0\leq k\leq n}$ of $[0,T]$,
and the following sequence $(u_{k})_{0\leq k \leq n}$ of $\mathcal{C}^{2+\alpha}(\mathcal{J}^a)$, defined recursively by
\begin{eqnarray*}
\text{for}~~k=0,~~ 
u_0=g,
\end{eqnarray*}
and for $1\leq k \leq n$, $u_{k}$ is the unique solution of the following elliptic problem 
\begin{eqnarray}\label{eq: schema ell}
\begin{cases}
n(u_{i,k}(x)-u_{i,k-1}(x))-\sigma_i(x,\partial_xu_{i,k}(x))\partial_x^2u_{i,k}(x))+\\
H_{i}(x,u_{i,k}(x),\partial_xu_{i,k}(x))~~=~~0,~~ \text{ if } x \in (0,a_i),\\ 
F(u_{k}(0),\partial_x u_k(0))~~=~~0,\\
\forall i\in\{1\ldots I\},~~ u_{i,k}(a_i)~~=~~\phi_i(t_k^n).
\end{cases}
\end{eqnarray}
The solvability of the elliptic scheme \eqref{eq: schema ell} can be proved by induction, using the same arguments as for Theorem \ref{th:exi elli}. The next step consists in obtaining uniform estimates of $(u_k)_{0\leq k\leq n}$. We start first by getting uniform bounds for $n|u_{i,k}-u_{i,k-1}|_{(0,a_i)}$ using the comparison Theorem \ref{th: Ellip Th Comp}. 
\begin{Lemma} \label{lm: esti derivée temps} 
Assume $(\mathcal{P})$. There exists a constant $C>0$, independent of $n$, depending only the data $C=C\Big(\max_{i\in\{1\ldots I\}}\Big\{~~\sup_{x\in(0,a_i)}|-\sigma_i(x,\partial_xg_i(x))\partial_x^2g_i(x)+H_{i}(x,g_i(x),\partial_xg_i(x))|+|\partial_t\phi_i|_{(0,T)}~~\Big\},C_H\Big)$, such that: 
\begin{eqnarray*}
\sup_{n\ge 0}~~\max_{k\in\{1\ldots n\}}~~\max_{i\in\{1\ldots I\}}\Big\{~~n|u_{i,k}-u_{i,k-1}|_{(0,a_i)}~~\Big\}~~\leq~~C,\\
\end{eqnarray*}
and then
\begin{eqnarray*}
\sup_{n\ge 0}~~\max_{k\in\{0\ldots n\}}~~\max_{i\in\{1\ldots I\}}\Big\{~~|u_{i,k}|_{(0,a_i)}~~\Big\}~~\leq~~C+\max_{i\in\{1\ldots I\}}\Big\{~~|g_{i}|_{(0,a_i)}~~\Big\}.
\end{eqnarray*}
\end{Lemma}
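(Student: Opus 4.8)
The plan is to bound $d_k:=\max_{i}n\|u_{i,k}-u_{i,k-1}\|_{\infty,(0,a_i)}$ by induction on $k$, comparing each iterate $u_k$ with constant vertical translates of $u_{k-1}$ through the elliptic comparison principle. First note that Theorem \ref{th: Ellip Th Comp} applies to each scheme \eqref{eq: schema ell}: its zeroth-order term $u\mapsto n(u-u_{k-1}(x))+H_i(x,u,p)$ has $u$-derivative $\ge n-C_H$, so for $n>C_H$ the monotonicity needed for comparison is only reinforced. The aim is to establish the recursion
\begin{eqnarray*}
d_k~~\le~~\max\Big\{~~(1-C_H/n)^{-1}d_{k-1},~~T\max_{i}|\partial_t\phi_i|_{(0,T)}~~\Big\},
\end{eqnarray*}
and then to check that its iterates stay bounded uniformly in $n$.

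For the inductive step, fix $k\ge2$, assume $|n(u_{i,k-1}-u_{i,k-2})|\le d_{k-1}$, and substitute $V:=u_{k-1}+c_k$ (a constant $c_k>0$) into the scheme at level $k$. Since $\partial_xV=\partial_xu_{k-1}$ and $\partial_{xx}V=\partial_{xx}u_{k-1}$, eliminating $-\sigma_i\partial_{xx}u_{k-1}+H_i(\cdot,u_{k-1},\partial_xu_{k-1})$ via the equation satisfied by $u_{k-1}$ gives on each edge
\begin{eqnarray*}
&&n(V_i-u_{i,k-1})-\sigma_i(x,\partial_xV_i)\partial_{xx}V_i+H_i(x,V_i,\partial_xV_i)\\
&&\qquad=~~nc_k-n(u_{i,k-1}-u_{i,k-2})+\Big[H_i(x,u_{i,k-1}+c_k,\partial_xu_{i,k-1})-H_i(x,u_{i,k-1},\partial_xu_{i,k-1})\Big].
\end{eqnarray*}
By $(\mathcal{P})$(iv)c) the bracket is $\ge-C_Hc_k$, and the inductive hypothesis gives $-n(u_{i,k-1}-u_{i,k-2})\ge-d_{k-1}$, so the right-hand side is $\ge c_k(n-C_H)-d_{k-1}\ge0$ as soon as $c_k\ge d_{k-1}/(n-C_H)$; thus $V$ is an interior supersolution. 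At the vertex, $F(V(0),\partial_xV(0))=F(u_{k-1}(0)+c_k,\partial_xu_{k-1}(0))\le F(u_{k-1}(0),\partial_xu_{k-1}(0))=0$, because $F$ is nonincreasing in its first argument under either alternative of $(\mathcal{P})$(i) and $u_{k-1}$ satisfies the Neumann identity; and $V_i(a_i)=\phi_i(t_{k-1}^n)+c_k\ge\phi_i(t_k^n)=u_{i,k}(a_i)$ as soon as $c_k\ge T\max_i|\partial_t\phi_i|_{(0,T)}/n$. Taking $c_k$ equal to the larger of the two lower bounds and applying comparison yields $u_k\le u_{k-1}+c_k$; the mirror choice $W:=u_{k-1}-c_k$, a subsolution (lowering the first argument raises $F$), gives $u_k\ge u_{k-1}-c_k$. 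Hence $d_k\le nc_k$, which is the announced recursion.

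The base case $k=1$ is identical with $u_0=g$, the only change being that $-\sigma_i\partial_{xx}g_i+H_i(\cdot,g_i,\partial_xg_i)$ is now bounded by $M_0:=\max_i\sup_{(0,a_i)}|-\sigma_i(x,\partial_xg_i)\partial_x^2g_i+H_i(x,g_i,\partial_xg_i)|$ (finite since $g\in\mathcal{C}_b^2(\overset{\circ}{\mathcal{J}^a})$), and the vertex sign uses the compatibility $F(g(0),\partial_xg(0))=0$ from $(\mathcal{P})$(v); this gives $d_1\le\max\{(1-C_H/n)^{-1}M_0,\,b\}$ with $b:=T\max_i|\partial_t\phi_i|_{(0,T)}$. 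Writing $a:=(1-C_H/n)^{-1}>1$, the recursion $d_k\le\max\{a\,d_{k-1},b\}$ unfolds to $d_k\le\max\{a^{k-1}d_1,a^{k-2}b\}\le a^{n}\max\{d_1,b\}$. Since $a^{n}=(1-C_H/n)^{-n}$ stays bounded as $n\to\infty$ (it converges to $e^{C_H}$ and is finite for $n>C_H$), the family $\{d_k\}_{1\le k\le n}$ is bounded by a constant $C$ depending only on $M_0$, $\max_i|\partial_t\phi_i|_{(0,T)}$ and $C_H$ (with $T$ fixed), which is the first estimate. The second one follows by telescoping: $|u_{i,k}|\le|g_i|+\sum_{j=1}^{k}|u_{i,j}-u_{i,j-1}|\le\max_i|g_i|+\frac{C}{n}k\le\max_i|g_i|+C$, using $k\le n$.

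I expect the delicate point to be the uniformity in $n$. One must keep the multiplicative/maximum form of the recursion rather than an additive one: summing the Dirichlet increments $T|\partial_t\phi_i|/n$ over the $n$ steps in an additive bookkeeping would produce a divergent factor of order $n$, whereas the maximum form confines the accumulated growth to $a^{n}=(1-C_H/n)^{-n}\to e^{C_H}$, the discrete counterpart of the Gronwall factor for the continuous problem. A secondary point, easily checked, is that the constant translates of $g$ and of $u_{k-1}$ are admissible super/sub solutions of the $k$-th elliptic scheme — the interior inequalities being required only on the open edges where $g$ is $\mathcal{C}^2$ — and that comparison is legitimate because the added term $n(u-u_{k-1})$ strengthens the monotonicity in $u$.
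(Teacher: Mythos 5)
Your proof is correct and follows essentially the same route as the paper: constant vertical translates of $u_{k-1}$ (resp.\ of $g$) as barriers for the $k$-th elliptic scheme, the elliptic comparison theorem made applicable by the strengthened monotonicity from the $n(u-u_{k-1})$ term, the bound $\partial_uH_i\ge -C_H$ and the monotonicity of $F$ in its first argument, and the factor $(1-C_H/n)^{-n}\to e^{C_H}$ controlling the accumulated growth. The only (immaterial) difference is bookkeeping: you keep the Dirichlet increment $T|\partial_t\phi_i|/n$ as a separate branch of a max-form recursion, whereas the paper folds $|\partial_t\phi_i|_{(0,T)}$ into $M_0$ and runs a purely multiplicative recursion $M_{k,n}=\tfrac{n}{n-C_H}M_{k-1,n}$.
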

\begin{proof}
Let $n>\lfloor C_H \rfloor$, where $C_H$ is defined in assumption $(\mathcal{P})$ (iv) c). Let $k\in\{1\ldots n\}$, we define the following sequence:
\begin{eqnarray*}
\begin{cases}
M_{0}~~=~~\max_{i\in\{1\ldots I\}}\Big\{~~\sup_{x\in(0,a_i)}|-\sigma_i(x,\partial_xg_i(x))\partial_x^2g_i(x)+H_{i}(x,g_i(x),\partial_xg_i(x))|+|\partial_t\phi_i|_{(0,T)}~~\Big\},\\
M_{k,n}~~=~~\displaystyle\frac{n}{n-C_H}M_{k-1,n},~~k\in\{1\ldots n\}.
\end{cases}
\end{eqnarray*}
We claim that for each $k\in\{1\ldots n\}$:
$$\max_{i\in\{1\ldots I\}}\Big\{~~n|u_{i,k}-u_{i,k-1}|_{(0,a_i)}\Big\}~~\leq~~M_{k,n}.$$
We give a proof by induction. For this, if $k=1$, let us show that the map $h$ defined on the junction by:
\begin{eqnarray*}
h:=\begin{cases}
\mathcal{J}^a\to \R \\
(x,i)\mapsto \frac{M_{1,n}}{n}+g_i(x),
\end{cases}
\end{eqnarray*}
is a super solution of \eqref{eq: schema ell}, for $k=1$. For this we will use the Elliptic Comparison Theorem \ref{th: Ellip Th Comp}.

Using the compatibility conditions satisfied by $g$, namely assumption $(\mathcal{P})$ (v), and the assumptions of growth on $F$, assumption $(\mathcal{P})$ (i), we get for the boundary conditions: 
\begin{eqnarray*}
F(h(0),\partial_x h(0))~~\leq~~ F(g(0),\partial_x g(0))~~=~~0,&\\ ~~\forall i\in\{1\ldots I\},~~ h_i(a_i)~~=~~\frac{M_{1,n}}{n}+g_i(a_i)~~\ge~~\frac{M_{0,n}}{n}+g_i(a_i)~~\ge~~\phi_i(t_1^n).&
\end{eqnarray*}
For all $i\in\{1\ldots I\}$, and $x\in(0,a_i)$, we get using assumption $(\mathcal{P})$ (iii):
\begin{eqnarray*}
n(h_{i}(x)-g_i(x))-\sigma_i(x,\partial_xh_i(x))\partial_x^2h_i(x)+H_{i}(x,h_i(x),\partial_xh_i(x))~~=~~&\\M_{1,n}-\sigma_i(x,\partial_xg_i(x))\partial_x^2g_i(x)+H_{i}(x,\frac{M_{1,n}}{n}+g_i(x),\partial_xg_i(x))~~\ge~~ &\\
M_{1,n}-\sigma_i(x,\partial_xg_i(x))\partial_x^2g_i(x)+H_{i}(x,g_i(x),\partial_xg_i(x)) -\frac{M_{1,n}C_H}{n}~~\ge~~0.&
\end{eqnarray*} 
It follows from the comparison Theorem \ref{th: Ellip Th Comp}, that for all $i\in\{1\ldots I\}$, and $x\in[0,a_i]$:
\begin{eqnarray*}
u_{1,i}(x)~~\leq~~\frac{M_{1,n}}{n}+g_i(x).
\end{eqnarray*}
Using the same arguments, we show that: 
\begin{eqnarray*}
h:=\begin{cases}
\mathcal{J}^a\to \R \\
(x,i)\mapsto -\frac{M_{1,n}}{n}+g_i(x),
\end{cases}
\end{eqnarray*}
is a sub solution of \eqref{eq: schema ell} for $k=1$, and we then get:
\begin{eqnarray*}
\max_{i\in\{1\ldots I\}}\Big\{~~\sup_{x\in(0,a_i)}n|u_{1,i}(x)-g_i(x)|~~\Big\}~~\leq~~M_{1,n}.
\end{eqnarray*}

Let $2\leq k\leq n$, suppose that the assumption of induction holds true.Let us show that the following map: 
\begin{eqnarray*}
h:=\begin{cases}
\mathcal{J}^a\to \R \\
(x,i)\mapsto \frac{M_{k,n}}{n}+u_{i,k-1}(x),
\end{cases}
\end{eqnarray*}
is a super solution of \eqref{eq: schema ell}.
For the boundary conditions, using assumption $(\mathcal{P})$ (i), we get:
\begin{eqnarray*}
F(h(0),\partial_x h(0))~~\leq~~F(u_{k-1}(0),\partial_x u_{k-1}(0))~~\leq~~0,\\
\forall i\in\{1\ldots I\},~~h_i(a_i)~~=~~\frac{M_{k,n}}{n}+u_{i,k-1}(a_i)~~\ge~~\frac{M_{0,n}}{n}+u_{i,k-1}(a_i)~~\ge~~\phi_i(t_k^n).&
\end{eqnarray*}
For all $i\in\{1\ldots I\}$, and $x\in(0,a_i)$:
\begin{eqnarray*}
n(h_i(x)-u_{i,k-1}(x))-\sigma_i(x,\partial_xh(x))\partial_x^2h(x)+H_{i}(x,h(x),\partial_xh(x))~~=~~&\\
M_{k,n}-\sigma_i(x,\partial_{x}u_{i,k-1}(x))\partial_x^2u_{i,k-1}(x)+H_{i}(x,\frac{M_{k,n}}{n}+u_{i,k-1}(x),\partial_xu_{i,k-1}(x))~~\ge~~&\\
M_{k,n}-\sigma_i(x,\partial_{x}u_{i,k-1}(x))\partial_x^2u_{i,k-1}(x)+H_{i}(x,u_{i,k-1}(x),\partial_xu_{i,k-1}(x))-\frac{C_HM_{k,n}}{n}.&
\end{eqnarray*}
Since we have for all $x\in(0,a_i)$:
\begin{eqnarray*}
-\sigma_i(x,\partial_{x}u_{i,k-1}(x))\partial_x^2u_{i,k-1}(x)+H_{i}(x,u_{i,k-1}(x),\partial_xu_{i,k-1}(x))~~=~~-n(u_{i,k-1}(x)-u_{i,k-2}(x)), 
\end{eqnarray*}
using the induction assumption we get:
\begin{eqnarray*}
n(h_i(x)-u_{i,k-1}(x))-\sigma_i(x,\partial_xh(x))\partial_x^2h(x)+H_{i}(x,\partial_xh(x),\partial_xh(x))~~\ge~~&\\
M_{k,n}-n(u_{i,k-1}(x)-u_{i,k-2}(x))-\frac{C_HM_{k,n}}{n}~~\ge~~M_{k,n}\frac{n-C_H}{n}-M_{k-1,n}~~\ge~~0.&
\end{eqnarray*}
It follows from the comparison Theorem \ref{th: Ellip Th Comp}, that for all $(x,i)\in \mathcal{J}^a$:
\begin{eqnarray*}
u_{i,k}(x)~~\leq~~\frac{M_{k,n}}{n}+u_{i,k-1}(x).
\end{eqnarray*}
Using the same arguments, we show that: 
\begin{eqnarray*}
h:=\begin{cases}
\mathcal{J}^a\to \R \\
(x,i)\mapsto -\frac{M_{k,n}}{n}+u_{i,k-1}(x),
\end{cases}
\end{eqnarray*}
is a sub solution of \eqref{eq: schema ell}, and we get:
\begin{eqnarray*}
\max_{i\in\{1\ldots I\}}\Big\{~~n|u_{i,k}(x)-u_{i,k-1}(x)|_{(0,a_i)}~~\Big\}~~\leq~~ M_{k,n}.
\end{eqnarray*}
We obtain finally using that for all $k\in\{1\ldots n\}$:
\begin{eqnarray*}
\begin{cases}
M_{k,n}~~\leq~~M_{n,n},\\
M_{k,n}~~=~~\Big(\displaystyle\frac{n}{n-C_H}\Big)^kM_{0},
\end{cases}
\end{eqnarray*}
and 
\begin{eqnarray*}
&M_{n,n}\xrightarrow[]{n\to +\infty}~~C~~:=~~\exp(C_H)\max_{i\in\{1\ldots I\}}\Big\{~~\sup_{x\in(0,a_i)}|-\sigma_i(x,\partial_xg_i(x))\partial_x^2g_i(x)+\\&
H_{i}(x,g_i(x),\partial_xg_i(x))|+|\partial_t\phi_i|_{(0,T)}~~\Big\},
\end{eqnarray*}
that
\begin{eqnarray*}
&\sup_{n\ge 0}~~\max_{k\in\{1\ldots n\}}~~\max_{i\in\{1\ldots I\}}\Big\{~~n|u_{i,k}-u_{i,k-1}|_{(0,a_i)}~~\Big\}~~\leq~~C,\\
&\sup_{n\ge 0}~~\max_{k\in\{0\ldots n\}}~~\max_{i\in\{1\ldots I\}}\Big\{~~|u_{i,k}|_{(0,a_i)}~~\Big\}~~\leq~~C+\max_{i\in\{1\ldots I\}}\Big\{~~|g_{i}|_{(0,a_i)}~~\Big\}.
\end{eqnarray*}
That completes the proof.
\end{proof}
The next step consists in obtaining uniform estimates for $|\partial_xu_{i,k}|_{(0,a_i)}$, in terms of $n|u_{i,k}-u_{i,k-1}|_{(0,a_i)}$ and the quantities $(\underline{\nu},\overline{\nu},\mu,\gamma,\varepsilon,P)$ introduced in assumption $(\mathcal{P})$  (ii), (iii) and (iv). More precisely, we use similar arguments as for the proof of Theorem 14.1 of \cite{Gilbarg}, using a classical argument of upper and lower barrier functions at the boundary. The assumption of growth ($\mathcal{P}$) (ii) and (iii) are used in a key way to get an uniform bound on the gradient at the boundary. Finally to conclude, we  appeal to a gradient maximum principle, using the growth assumption ($\mathcal{P}$) (iv), adapting Theorem 15.2 of \cite{Gilbarg} to our elliptic scheme.
\begin{Lemma}\label{lm : borne de grad u}
Assume ($\mathcal{P}$). There exists a constant $C>0$, independent of $n$, depending only the data:
\begin{eqnarray*}
&\Big(\overline{\nu},\underline{\nu},\mu(|u|),\gamma(|u|),\varepsilon(|u|),\sup_{|p|\ge 0}P(|u|,|p|),\max_{i\in\{1\ldots I\}}|\partial_xg_i|_{(0,a_i)},\\
&\sup_{n\ge 0}\max_{k\in\{1\ldots n\}}\max_{i\in\{1\ldots I\}}\Big\{~~n|u_{i,k}-u_{i,k-1}|_{(0,a_i)}~~\Big\},\\
& \text{with}~~ |u|\leq\sup_{n\ge 0}\max_{k\in\{0\ldots n\}}\max_{i\in\{1\ldots I\}}~~\Big\{~~|u_{i,k}|_{(0,a_i)}~~\Big\}\Big),
\end{eqnarray*} such that:
\begin{eqnarray*}
\sup_{n\ge 0}~~\max_{k\in\{0\ldots n\}}~~\max_{i\in\{1\ldots I\}}~~\Big\{~~|\partial_xu_{i,k}|_{(0,a_i)}~~\Big\}~~\leq~~C.
\end{eqnarray*}
\end{Lemma}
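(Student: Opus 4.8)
The plan is to fix $n$ and prove by induction on the time index $k$ that $\max_{i}|\partial_x u_{i,k}|_{(0,a_i)}\le C$ for a constant $C$ depending only on the data listed in the statement, in particular independent of $n$ and $k$. The base case $k=0$ is immediate since $u_0=g\in\mathcal{C}^1(\mathcal{J}^a)$, so $|\partial_x u_{i,0}|_{(0,a_i)}=|\partial_x g_i|_{(0,a_i)}$. For the inductive step I first reduce matters to a family of one dimensional Dirichlet problems: by Lemma \ref{lm: esti derivée temps} the quantities $|u_{i,k}|_{(0,a_i)}$ and $n|u_{i,k}-u_{i,k-1}|_{(0,a_i)}$ are bounded uniformly in $n$ and $k$, so on each edge $[0,a_i]$ the function $u_{i,k}$ solves
\begin{eqnarray*}
-\sigma_i(x,\partial_xu_{i,k})\partial_{x,x}^2u_{i,k}+H_i(x,u_{i,k},\partial_xu_{i,k})~~=~~f_{i,k}(x),\qquad f_{i,k}:=-n(u_{i,k}-u_{i,k-1}),
\end{eqnarray*}
with $|f_{i,k}|\le C_0$ and Dirichlet data $u_{i,k}(0)=u_k(0)$, $u_{i,k}(a_i)=\phi_i(t_k^n)$, all uniformly bounded. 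Thus the nonlinear Neumann condition at the vertex plays no role in the gradient bound: it enters only through the already established $L^\infty$ control of $u_k(0)$.

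The next step is the boundary gradient estimate at the two endpoints of each edge. Since the source $f_{i,k}$ is merely bounded, I construct upper and lower barriers of the form $w(x)=u_{i,k}(x_b)\pm\psi(|x-x_b|)$ at each endpoint $x_b\in\{0,a_i\}$, with $\psi$ a concave exponential type profile. Using the ellipticity $(\mathcal{P})$(ii), namely $\underline{\nu}(1+|p|)^{m-2}\le\sigma_i\le\overline{\nu}(1+|p|)^{m-2}$, together with the growth $|H_i|\le\mu(|u|)(1+|p|)^{m}$ of $(\mathcal{P})$(iii), one checks that these barriers are super/sub solutions of the above equation, the bounded term $f_{i,k}$ being absorbed by the free parameters in $\psi$. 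The comparison Theorem \ref{th: Ellip Th Comp} then yields $|\partial_x u_{i,k}(x_b)|\le C_1$ with $C_1$ depending only on $\overline{\nu},\underline{\nu},\mu,C_0$ and the data, uniformly in $n,k$. This is the one dimensional version of the boundary gradient estimate of Theorem 14.1 of \cite{Gilbarg}.

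It remains to bound $|\partial_x u_{i,k}|$ at an interior maximum, which is the heart of the matter and where I adapt the gradient maximum principle of Theorem 15.2 of \cite{Gilbarg}. Suppose $p:=\partial_x u_{i,k}$ attains over $[0,a_i]$ an interior maximum at $x_0$ with $p(x_0)>\max(C_1,\max_i|\partial_x g_i|_{(0,a_i)},C_2)$, the threshold $C_2$ to be fixed below. Differentiating the equation in $x$ and evaluating at $x_0$ (where $\partial_{x,x}^2u_{i,k}(x_0)=0$ and $\partial_x^3u_{i,k}(x_0)\le0$), then using the equation to express the third derivative, produces a differential inequality in which the genuinely dangerous term is $\partial_xH_i$, of supercritical size $(\varepsilon(|u|)+P(|u|,|p|))(1+|p|)^{m+1}$ by $(\mathcal{P})$(iv) b). The whole point of the structure conditions is that, after dividing by the elliptic factor $\sigma_i\gtrsim(1+|p|)^{m-2}$ and using (iv) a) to control the $\partial_p$ terms and (iv) c) to control $\partial_uH_i$, every bad contribution carries the coefficient $\varepsilon(|u|)+P(|u|,|p|)$; since $P(|u|,|p|)\to0$ as $|p|\to+\infty$ uniformly for bounded $|u|$, these are dominated for $|p|$ large by the good terms coming from ellipticity. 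This forces $p(x_0)\le C_2$ with $C_2$ depending only on the listed data, contradicting $p(x_0)>C_2$.

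The only feature absent from the classical elliptic estimate is the discretization term, whose $x$ derivative contributes $n(\partial_xu_{i,k}(x_0)-\partial_xu_{i,k-1}(x_0))$ to the differentiated equation; a priori this is $O(n)$ and cannot be treated as a lower order term. This is exactly where the induction is used: since $p(x_0)=\partial_xu_{i,k}(x_0)$ exceeds the inductive constant while $|\partial_xu_{i,k-1}(x_0)|\le C$, this term is nonnegative and sits on the favourable side of the inequality at the maximum, so it may simply be discarded, leaving precisely the pure elliptic inequality treated in the previous paragraph. Arguing symmetrically at an interior minimum of $\partial_xu_{i,k}$ (where the sign of $n(\partial_xu_{i,k}-\partial_xu_{i,k-1})$ is again favourable), and setting $C:=\max(\max_i|\partial_xg_i|_{(0,a_i)},C_1,C_2)$, closes the induction with a constant independent of $n$ and $k$; taking the supremum over $n$ and $k$ gives the claim. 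I expect the main difficulty to be the interior estimate, namely verifying that the smallness of $\varepsilon+P$ really dominates the supercritical $(1+|p|)^{m+1}$ growth once the degenerate ellipticity is divided out, whereas the discretization term, thanks to its sign, is a comparatively mild complication handled by the induction.
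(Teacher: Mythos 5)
Your overall architecture is the same as the paper's: a boundary gradient estimate by barriers at the endpoints of each edge (using $(\mathcal{P})$(ii)--(iii) exactly as you describe), followed by an interior gradient bound in the spirit of Theorem 15.2 of \cite{Gilbarg}, with the discretization term handled through its sign once $|\partial_x u_{i,k}|$ exceeds the inductive bound on $|\partial_x u_{i,k-1}|$ --- that last observation is precisely how the paper absorbs the term $n(p-\partial_xu_{i,k-1})/(\sigma_i p^3)$ into $G_{i,k}^n\le 0$. The boundary step is essentially fine (one caveat: once you move $f_{i,k}=-n(u_{i,k}-u_{i,k-1})$ to the right-hand side, the operator $-\sigma_i\partial_{xx}^2+H_i$ is no longer proper in $u$, since $(\mathcal{P})$(iv)c) only gives $\partial_uH_i\ge -C_H$; the paper keeps the $n(u-u_{i,k-1})$ term inside the operator precisely to retain the monotonicity needed for comparison, at the price of an induction in $k$ for the barrier itself).

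The genuine gap is in your interior estimate. At an interior maximum $x_0$ of $p=\partial_xu_{i,k}$ you have $\partial_{xx}^2u_{i,k}(x_0)=0$, so \emph{every} term of the differentiated equation containing $\partial_{xx}^2u_{i,k}$ (namely $\partial_p\sigma_i(\partial_{xx}^2u)^2$, $\partial_x\sigma_i\,\partial_{xx}^2u$, $\partial_pH_i\,\partial_{xx}^2u$) vanishes, and $-\sigma_i\partial_x^3u_{i,k}(x_0)\ge0$ contributes only a sign, not a quantitative "good term". What remains, after discarding the favourable discretization term, is exactly $\partial_xH_i+\partial_uH_i\,p\le 0$ at $x_0$. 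This is not a contradiction for large $p$: by $(\mathcal{P})$(iv)b) one only knows $\partial_xH_i\ge -(\varepsilon(|u|)+P(|u|,|p|))(1+|p|)^{m+1}$ (and $\varepsilon(|u|)$ is a fixed positive constant, not small), while $(\mathcal{P})$(iv)c) only gives $\partial_uH_i\ge -C_H$, so the left-hand side can be enormously negative and the inequality is vacuous. There is no "good term coming from ellipticity" to divide out against: that term is manufactured in \cite{Gilbarg} by the substitution $u=\psi(\overline u)$ and the maximum principle applied to $\overline v=(\partial_x\overline u)^2$; at a critical point of $\overline v$ one has $\partial_{xx}^2u=\omega\,(\partial_xu)^2\neq0$ with $\omega=\psi''/\psi'^2$ a free parameter, which generates the terms $\kappa_i\omega^2+\beta_i\omega+\omega'/\psi'$ of size comparable to the supercritical contribution $\partial_xH_i/(\sigma_ip^3)$ and allows one to force $G_i\le0$ for $|p|\ge L$. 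This is the step the paper actually carries out (its inequality \eqref{eq : ineq prin maxim} and the bounds \eqref{eq : condit de bornitud coeff}), and it cannot be replaced by the naive evaluation at the maximum of $\partial_xu_{i,k}$ under the stated structure conditions.
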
  
\begin{proof}
\textbf{Step 1:} We claim that, for each $k\in\{1\ldots n\}$, $\max_{i\in\{1\ldots I\}}\Big\{~~|\partial_xu_{i,k}|_{\partial(0,a_i)}~~\Big\}$ is bounded by the data, uniformly in $n$.

It follows from Lemma \ref{lm: esti derivée temps}, that there exists $M>0$ such that: 
\begin{eqnarray*}
\sup_{n\ge 0}~~\max_{k\in\{0\ldots n\}}~~\max_{i\in\{1\ldots I\}}~~\Big\{~~|u_{i,k}|_{(0,a_i)}+n|u_{i,k}-u_{i,k-1}|_{(0,a_i)}~~\Big\}~~\leq~~M.
\end{eqnarray*}
We fix $i\in\{1\ldots I\}$. We apply a barrier method consisting in building two functions $w^+_{i,k},w_{i,k}^-$ satisfying in a neighborhood of $0$, for example $[0,\kappa]$, with $\kappa\leq a_i$:
\begin{eqnarray*}
Q_{i}(x,w^+_{i,k}(x),\partial_xw^+_{i,k}(x),\partial_x^2w^+_{i,k}(x))\ge 0,~~\forall x\in [0,\kappa], ~~w^+_{i,k}(0)=u_{i,k}(0), ~~w^+_{i,k}(\kappa)\ge M,&\\
Q_{i}(x,w_{i,k}^-(x),\partial_xw_{i,k}^-(x),\partial_x^2w_{i,k}^-(x))\leq 0,~~\forall x\in [0,\kappa],~~ w_{i,k}^-(0)=u_{i,k}(0),~~ w_{i,k}^-(\kappa)\leq -M,&
\end{eqnarray*}
where we recall that for each $(x,u,p,S)\in [0,a_i]\times R^3$:
\begin{eqnarray*}
Q_{i}(x,u,p,S)~~=~~n(u-u_{i,k-1}(x))-\sigma_i(x,p)S+H_i(x,u,p).
\end{eqnarray*}
For $n>\lfloor C_H \rfloor$, where $C_H$ is defined in assumption $\mathcal{P}$ (iv) c), it follows then from the comparison principle that:
\begin{eqnarray*}
w_{i,k}^-(x)\leq u_{i,k}(x)\leq w_{i,k}^+(x),~~\forall x\in[0,\kappa],
\end{eqnarray*}
and then:
\begin{eqnarray*}
\partial_xw_{i,k}^-(0)\leq \partial_x u_{i,k}(0)\leq \partial_xw_{i,k}^+(0).
\end{eqnarray*}
We look for $w^+_{i,k}$ defined on $[0,\kappa]$ of the form:
\begin{eqnarray*}
&w_{i,0}^+=g_i(x)\\
&w^+_{i,k}:x\mapsto u_{i,k}(0) +\displaystyle\frac{1}{\beta}\ln(1+\theta x),
\end{eqnarray*}
where the constants $(\beta,\theta,\kappa)$ will be chosen in the sequel independent of $k$.
Remark first that for all $x\in[0,\kappa]$, $\partial_x^2w^+_{i,k}(x)=-\beta\partial_xw^+_{i,k}(x)^2$, and $w^+_{i,k}(0)=u_{i,k}(0)$. 
Let us choose $(\theta,\kappa)$, such that: 
\begin{eqnarray}\label{eq : con upp bar 1}
\forall k\in\{1\ldots n\},~~0< \kappa\leq \min_{i\in\{1\ldots I\}}a_i,~~w^+_{i,k}(\kappa)\ge M,~~\partial_xw^+_{i,k}(\kappa)\ge \beta.
\end{eqnarray}
We choose for instance:
\begin{eqnarray}\label{eq : con upp bar 2}
\nonumber&\theta=\beta^2\exp(2\beta M)~~+~~\displaystyle\frac{1}{\min_{i\in\{1\ldots I\}}a_i}\exp(2\beta M)\\
&\kappa=\displaystyle\frac{1}{\theta}\Big(\exp(2\beta M)-1\Big).
\end{eqnarray}
The constant $\beta$ will be chosen in order to get:
\begin{eqnarray}\label{eq : con upp  bar 3}
\beta~~\ge~~\sup_{k\in\{1\ldots n\}}\sup_{x\in[0,\kappa]}\displaystyle\frac{\mu(w^+_{i,k}(x))(1+\partial_xw^+_{i,k}(x))^m+M}{\underline{\nu}(1+\partial_xw^+_{i,k}(x))^{m-2}\partial_xw^+_{i,k}(x)^2},
\end{eqnarray}
where $(\mu(.),\underline{\nu},m)$ are defined in assumption $(\mathcal{P})$ (ii) and (iii).
Since we have:
\begin{eqnarray*}
&\forall x\in[0,\kappa],~~w^+_{i,k}(x)\leq w^+_{i,k}(\kappa)=2M,\\
&\beta\leq\partial_xw^+_{i,k}(\kappa)\leq\partial_xw^+_{i,k}(x)\leq\partial_xw^+_{i,k}(0),
\end{eqnarray*}
we can then choose $\beta$ large enough to get \eqref{eq : con upp  bar 3}, for instance:
\begin{eqnarray*}
\beta~~\ge~~\displaystyle\frac{\mu(2M)}{\underline{\nu}}\Big(1+\frac{1}{\beta}\Big)^2+\displaystyle\frac{M}{\underline{\nu}\beta^2}.
\end{eqnarray*}
It is easy to show by induction that $w^+_{i,k}$ is lower barrier of $u_{i,k}$ in the neighborhood $[0,\kappa]$. More precisely, since $w^+_{i,0}=u_{i,0}$, and for all $k\in\{1\ldots n\}$: 
\begin{eqnarray*}
&w^+_{i,k}(0)=u_{i,k}(0),~~w^+_{i,k}(\kappa)\ge u_{i,k}(\kappa),\\
&w^+_{i,k}(x)=w^+_{i,k-1}(x)+u_{i,k}(0)-u_{i,k-1}(0)\ge w^+_{i,k-1}(x)-\displaystyle \frac{M}{n},
\end{eqnarray*}
we get using the assumption of induction, assumption $(\mathcal{P})$ (ii) and (iii), and \eqref{eq : con upp  bar 3} that for all $x\in(0,\kappa)$:
\begin{eqnarray*}
&n(w^+_{i,k}(x)-u_{i,k-1}(x))-\sigma_i(x,\partial_xw^+_{i,k}(x))\partial_x^2w^+_{i,k}(x)+H_i(x,w^+_{i,k}(x),\partial_xw^+_{i,k}(x))\ge \\
&-M+\beta\sigma_i(x,\partial_xw^+_{i,k}(x))\partial_{x}w^+_{i,k}(x)^2+ H_i(x,w^+_{i,k}(x),\partial_xw^+_{i,k}(x))\ge\\
&-M+\beta\underline{\nu}(1+\partial_{x}w^+_{i,k}(x))^{m-2}\partial_{x}w^+_{i,k}(x)^2+ \mu(w^+_{i,k}(x))(1+\partial_{x}w^+_{i,k}(x))^{m}\ge 0.
\end{eqnarray*}
We obtain therefore:
\begin{eqnarray*}
\sup_{n\ge 0}~~\max_{k\in\{0\ldots n\}}~~\max_{i\in\{1\ldots I\}}~~\partial_x u_{i,k}(0)~~\leq~~\frac{\theta}{\beta}\vee \max_{i\in\{1\ldots I\}} \partial_x g_i(0).
\end{eqnarray*}
With the same arguments we can show that:
\begin{eqnarray*}
&w_{i,0}^-=g_i(x)\\
&w^{-}_{i,k}:x\mapsto u_{i,k}(0) -\displaystyle\frac{1}{\beta}\ln(1+\theta x),
\end{eqnarray*}
is a lower barrier in the neighborhood of $0$. Using the same method, we can show that $\partial_xu_{i,k}(a_i)$ is uniformly bounded by the same upper bounds, which completes the proof of \textbf{Step 1}.

\textbf{Step 2 :} 
For the convenience of the reader, we do not detail all the computations of this Step, since they can be found in the proof of Theorem 15.2 of \cite{Gilbarg}. 
It follows from Lemma \ref{lm: esti derivée temps} that there exists $M>0$ such that: 
\begin{eqnarray*}
&\sup_{n\ge 0}~~\max_{k\in\{0\ldots n\}}~~\max_{i\in\{1\ldots I\}}~~\Big\{~~|u_{i,k}|_{(0,a_i)}~~\Big\}~~\leq~~M.
\end{eqnarray*}
We set furthermore:
\begin{eqnarray*}
\forall (x,u,p)\in[0,a_i]\times\R^2,~~H_{i,k}^n(x,u,p)=n(u-u_{i,k-1}(x))+H_i(x,u,p).
\end{eqnarray*}
Let $u$ be a solution of the elliptic equation, for $x\in(0,a_i)$:
\begin{eqnarray*}
\sigma_i(x,\partial_xu(x))\partial_xu(x)-H_{i,k}^n(x,u(x),\partial_xu(x))=0,
\end{eqnarray*}
and assume that $|u|_{(0,a_i)}\leq M$.
The main key of the proof will be in the use of the following equalities:
\begin{eqnarray}\label{eq :opp appl a H}
\delta_iH_{i,k}^n(x,u,p)=\delta_iH_i(x,u,p)+\frac{n(p-\partial_xu_{i,k-1}(x))}{p},~~\overline{\delta}_iH_{i,k}^n(x,u,p)=\delta_iH_i(x,u,p),
\end{eqnarray}
where we recall that the operators $\delta_i$ and $\bar \delta_i$ are defined in assumption $(\mathcal{E})$ (iii). We follow the proof of Theorem 15.2 in \cite{Gilbarg}. We set $u=\psi(\overline{u})$, where $\psi\in\mathcal{C}^3[\overline{m},\overline{M}]$, is increasing and $\overline{m}=\phi(-M)$, $\overline{M}=\phi(M)$.

In the sequel, we will set $v=\partial_x u^2$ and $\overline{v}=\partial_x\overline{u}^2$. To simplify the notations, we will omit the variables $(x,u(x),\partial_xu(x))$ in the functions $\sigma_i$ and $H_{i,k}^n$, and the variable $\overline{u}$ for $\psi$. 
We assume first that the solution $u\in\mathcal{C}^3([-M,M])$, and we follow exactly all the computations that lead to equation of (15.25) of \cite{Gilbarg} to get the following inequality:
\begin{eqnarray}\label{eq : ineq prin maxim}
\sigma_i\partial_x^2\overline{v}+B_i\partial_x\overline{v}+G_{i,k}^n\ge 0,
\end{eqnarray}
where $B_i$ and $G_{i,k}^n$ have the same expression in (15.26) of \cite{Gilbarg} with $(\sigma_i=\sigma_i^*,c_i=0$). We choose $(r=0,s=0)$, since we will see in the sequel \eqref{eq : condit de bornitud coeff}, that condition (15.32) of \cite{Gilbarg} holds under assumption assumption $(\mathcal{P})$. We have more precisely: 
\begin{eqnarray*}
&B_i=\psi'\partial_p\sigma_i\partial_x\overline{u}-\partial_pH_i+\omega\partial_p(\sigma_ip^2),\\
&G_{i,k}^n=\displaystyle\frac{\omega'}{\psi'}+\kappa_i\omega ^2+\beta_i\omega+\theta_{i,k}^n,\\
&\omega=\displaystyle\frac{\psi''}{\psi'^2}\in\mathcal{C}^1([\overline{m},\overline {M}]),\\
&\kappa_i=\displaystyle\frac{1}{\sigma_ip^2}\Big(\overline{\delta}_i(\sigma_ip^2)+\frac{p^2}{4\sigma_i}|(\overline{\delta}_i+1)\sigma_i|^2\Big),\\
&\beta_i=\displaystyle\frac{1}{\sigma_ip^2}\Big(\delta_i(\sigma_ip^2)-\overline{\delta}_iH_i+\frac{p^2}{2\sigma_i}((\overline{\delta}_i+1)\sigma_i)(\delta_i\sigma_i)\Big),
\\&\theta_{i,k}^n=\displaystyle\frac{1}{\sigma_ip^2}\Big(\frac{p^2}{4\sigma_i}|\delta_i\sigma_i|^2-\delta_iH_{i,k}^n\Big)=\theta_i-\displaystyle\frac{1}{\sigma_ip^2}\Big(\frac{n(p-\partial_xu_{i,k-1}(x))}{p}\Big),\\
&\theta_i=\displaystyle\frac{1}{\sigma_ip^2}\Big(\frac{p^2}{4\sigma_i}|\delta_i\sigma_i|^2-\delta_iH_i\Big).
\end{eqnarray*}
We set in the sequel:
\begin{eqnarray*}
G_i=\displaystyle\frac{\partial_x\omega}{\partial_x\psi}+\kappa_i\omega ^2+\beta_i\omega+\theta_{i},\text{ in order to get } G_{i,k}^n=G_i-\displaystyle\frac{1}{\sigma_ip^2}\Big(\frac{n(p-\partial_xu_{i,k-1}(x))}{p}\Big).
\end{eqnarray*}
More precisely, we see from \eqref{eq :opp appl a H} that all the coefficients $(B_i,\kappa_i,\beta_i,\theta_i)$ can be chosen independent of $n$ and $u_{i,k-1}$. The main argument then to get a bound of $\partial_xu$ is to apply a maximum principle for  $\overline{v}$ in \eqref{eq : ineq prin maxim}, and this will be done as soon as we ensure:
\begin{eqnarray*}
G_{i,k}^n\leq 0, \text{ for }|\partial_xu|\ge L_k^n.
\end{eqnarray*}
On the other hand, using assumption ($\mathcal{P}$) (ii) (iii) and (iv), it is easy to check that there exist constants $(a,b,c)$, depending only on the data: 
\begin{eqnarray*}
\Big(\overline{\nu},\underline{\nu},\mu(M),\gamma(M),\varepsilon(M),\sup_{|p|\ge 0}P(M,|p|)\Big),
\end{eqnarray*}
such that:
\begin{eqnarray} \label{eq : condit de bornitud coeff}
\nonumber&\sup_{x\in[0,a_i],|u|\leq M}~~\limsup_{|p|\to +\infty }~~\kappa_i(x,u,p)~~\leq~~a,\\\nonumber&
\sup_{x\in[0,a_i],|u|\leq M}~~\limsup_{|p|\to +\infty }~~\beta_i(x,u,p)~~\leq~~b,\\
&\sup_{x\in[0,a_i],|u|\leq M}~~\limsup_{|p|\to +\infty }~~\theta_i(x,u,p)~~\leq~~c,
\end{eqnarray}
where, 
\begin{eqnarray*}
&a=\displaystyle\frac{1}{\underline{\nu}}(\gamma(M)+2\overline{\nu})+\frac{1}{2}+\frac{\gamma(M)^2}{\underline{\nu}^2},\\
&b=\displaystyle\frac{\varepsilon(M)+\sup_{|p|\ge 0}P(M,|p|)+\gamma(M)}{\underline{\nu}}+\displaystyle\frac{(\varepsilon(M)+\sup_{|p|\ge 0}P(M,|p|))(\overline{\nu}+\gamma(M))}{\underline{\nu}^2},\\
&c=\displaystyle\frac{(\varepsilon(M)+\sup_{|p|\ge 0}P(M,|p|))^2}{4\underline{\nu}^2}+\displaystyle\frac{2(\varepsilon(M)+\sup_{|p|\ge 0}P(M,|p|))}{\underline{\nu}}.
\end{eqnarray*}
As it has been on the proof of Theorem 15.2 of \cite{Gilbarg}, we choose then $L=L(a,b,c)$, and $\psi(\cdot)=\psi(a,b,c)(\cdot)$ such that we have:
\begin{eqnarray*}
G_i\leq 0,~~\text{if}~~|\partial_xu(x)|\ge L(a,b,c).
\end{eqnarray*}
We see then from the expression of $\theta_{i,k}^n$ that we get
\begin{eqnarray*}
G_{i,k}^n\leq 0,~~\text{if}~~|\partial_xu(x)|\ge L(a,b,c)\vee |\partial_xu_{i,k-1}(x)|.
\end{eqnarray*}
Therefore applying the maximum principle to $\overline{v}$ in \eqref{eq : ineq prin maxim}, and from the relation $u=\psi(\overline{u})$, $\overline{v}=\partial_x\overline{u}^2$ we get finally: 
\begin{eqnarray*}
|\partial_xu|_{(0,a_i)}\leq \max\Big(\frac{\max \psi'(a,b,c)(\cdot)}{\min\psi'(a,b,c)(\cdot)},|\partial_xu|_{\partial(0,a_i)},L(a,b,c),|\partial_xu_{i,k-1}|_{(0,a_i)}\Big).
\end{eqnarray*}
This upper bound still holds if $u\in\mathcal{C}^2([0,a_i])$, (cf. (15.30) and (15.31) of the proof of Theorem 15.2 in \cite{Gilbarg}). Finally applying the upper bound above to the solution $u_k$, we get by induction that:
\begin{eqnarray*}
&\sup_{n\ge 0}~~\max_{k\in\{0\ldots n\}}~~\max_{i\in\{1\ldots I\}}~~\Big\{~~|\partial_xu_{i,k}|_{(0,a_i)}~~\Big\}\\
&\leq~~\max_{i\in\{1\ldots I\}}\max\Big(\displaystyle\frac{\max \psi'(a,b,c)(\cdot)}{\min\psi'(a,b,c)(\cdot)},|\partial_xu_{i,k}|_{\partial(0,a_i)},L(a,b,c),|\partial_xg_{i}|_{(0,a_i)}\Big).
\end{eqnarray*}
This completes the proof.
\end{proof}
The following Proposition follows from Lemmas \ref{lm: esti derivée temps} and \ref{lm : borne de grad u}, assumption $(\mathcal{P})$ (ii) (iii), and from the relation: 
\begin{eqnarray*}
&\forall x\in [0,a_i],~~|\partial_x^2u_{i,k}(x))|\leq \displaystyle\frac{|n(u_{i,k}(x)-u_{i,k-1}(x))|+|H_{i}(x,u_{i,k}(x),\partial_xu_{i,k}(x))|}{\sigma_i(x,\partial_xu_{i,k}(x))}\\
&\leq \displaystyle\frac{|n(u_{i,k}(x)-u_{i,k-1}(x))|+\mu(|u_{i,k}(x)|)(1+|\partial_xu_{i,k}(x)|^{m})}{\underline{\nu}(1+|\partial_xu_{i,k}(x)|^{m-2})}.
\end{eqnarray*}
\begin{Proposition} \label{pr: esti pas} 
Assume ($\mathcal{P}$). There exist constants $(M_1,M_2,M_3)$, depending only the data introduced in assumption $(\mathcal{P})$ 
\begin{eqnarray*}
&M_1=M_1\Big(\max_{i\in\{1\ldots I\}}\Big\{~~\sup_{x\in(0,a_i)}|-\sigma_i(x,\partial_xg_i(x))\partial_x^2g_i(x)+H_{i}(x,g_i(x),\partial_xg_i(x))|+\\
&|\partial_t\phi_i|_{(0,T)}~~\Big\},\max_{i\in\{1\ldots I\}} |g_i|_{(0,a_i)},C_H\Big),\\
&M_2=M_2\Big(\overline{\nu},\underline{\nu},\mu(M_1),\gamma(M_1),\varepsilon(M_1),\sup_{|p|\ge 0}P(M_1,|p|),\max_{i\in\{1\ldots I\}}|\partial_xg_i|_{(0,a_i)},M_1\Big),
\\
&M_3=M_3\Big(M_1,\underline{\nu}(1+|p|)^{m-2},\mu(|u|)(1+|p|)^m,~~|u|\leq M_1,~~|p|\leq M_2\Big),\\
\end{eqnarray*}
such that:
\begin{eqnarray*}
\sup_{n\ge 0}~~\max_{k\in\{0\ldots n\}}~~\max_{i\in\{1\ldots I\}}~~\Big\{~~|u_{i,k}|_{(0,a_i)}~~\Big\}~~&\leq~~M_1,\\
\sup_{n\ge 0}~~\max_{k\in\{0\ldots n\}}~~\max_{i\in\{1\ldots I\}}~~\Big\{~~|\partial_xu_{i,k}|_{(0,a_i)}~~\Big\}~~&\leq~~M_2,\\
\sup_{n\ge 0}~~\max_{k\in\{1\ldots n\}}~~\max_{i\in\{1\ldots I\}}~~\Big\{~~|n(u_{i,k}-u_{i,k-1})|_{(0,a_i)}~~\Big\}~~&\leq~~M_1,\\
\sup_{n\ge 0}~~\max_{k\in\{0\ldots n\}}~~\max_{i\in\{1\ldots I\}}~~\Big\{~~|\partial_x^2u_{i,k}|_{(0,a_i)}~~\Big\}~~&\leq~~M_3.
\end{eqnarray*}
\end{Proposition}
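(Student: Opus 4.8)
The plan is to assemble the Proposition directly from the two preceding Lemmas together with the algebraic identity furnished by the scheme itself; no genuinely new argument is needed beyond tracking the chain of dependencies so that every constant depends only on the data of assumption $(\mathcal{P})$ and is uniform in $n$. First I would invoke Lemma \ref{lm: esti derivée temps}, which simultaneously controls $n|u_{i,k}-u_{i,k-1}|_{(0,a_i)}$ and, by telescoping from $u_0=g$, the sup-norm $|u_{i,k}|_{(0,a_i)}$. Taking $M_1$ to be the larger of these two bounds yields at once the first and third asserted estimates, with $M_1$ depending only on the quantity $M_0$ of that Lemma, on $\max_i|g_i|_{(0,a_i)}$ and on $C_H$, exactly as stated.

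Next I would apply Lemma \ref{lm : borne de grad u} to obtain the gradient estimate. The constant produced there is expressed in terms of $\sup_{n,k,i}|u_{i,k}|_{(0,a_i)}$ and $\sup_{n,k,i}n|u_{i,k}-u_{i,k-1}|_{(0,a_i)}$, both of which have just been bounded by $M_1$; substituting these gives a constant $M_2$ depending only on $\overline{\nu},\underline{\nu},\mu(M_1),\gamma(M_1),\varepsilon(M_1),\sup_{|p|\ge0}P(M_1,|p|),|\partial_xg_i|_{(0,a_i)}$ and $M_1$, which is precisely the second asserted estimate.

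Finally, for the second-order bound I would return to the scheme \eqref{eq: schema ell} and solve algebraically for the second derivative: writing
\begin{eqnarray*}
\sigma_i(x,\partial_xu_{i,k})~\partial_{x,x}^2u_{i,k}~~=~~n(u_{i,k}-u_{i,k-1})+H_i(x,u_{i,k},\partial_xu_{i,k}),
\end{eqnarray*}
dividing by $\sigma_i$, and invoking the ellipticity lower bound $(\mathcal{P})$(ii) together with the growth bound $(\mathcal{P})$(iii) on $H_i$ gives exactly the pointwise inequality displayed just before the statement. Inserting the third estimate $n|u_{i,k}-u_{i,k-1}|\le M_1$ in the numerator, the right-hand side becomes a continuous function of $(|u_{i,k}|,|\partial_xu_{i,k}|)$ on the compact set $[0,M_1]\times[0,M_2]$; maximizing over this set produces $M_3=M_3(M_1,\underline{\nu},\mu,m,M_2)$ and the last estimate.

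The proof is thus essentially bookkeeping, the analytic content residing entirely in Lemmas \ref{lm: esti derivée temps} and \ref{lm : borne de grad u}. The one point I regard as requiring care, and hence the main thing to verify, is the strict uniformity in $n$ together with the nesting of dependencies: that $M_2$ depends on $n$ only through the already $n$-uniform quantity $M_1$, and that the denominator $\underline{\nu}(1+|\partial_xu_{i,k}|^{m-2})$ stays bounded below by $\underline{\nu}$ (using $m\ge2$), so that the second-derivative quotient is genuinely controlled on the compact parameter range rather than merely finite pointwise. Once this is checked, passing to $\sup_{n\ge0}\max_k\max_i$ in each of the three estimates is immediate.
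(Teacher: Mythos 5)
Your proposal is correct and follows exactly the same route as the paper, which likewise deduces the Proposition by combining Lemma \ref{lm: esti derivée temps} (for $M_1$), Lemma \ref{lm : borne de grad u} (for $M_2$), and the algebraic relation $|\partial_{x,x}^2u_{i,k}|\leq\big(|n(u_{i,k}-u_{i,k-1})|+|H_i|\big)/\sigma_i$ together with $(\mathcal{P})$(ii)--(iii) for $M_3$. The bookkeeping of dependencies you flag (uniformity in $n$ through $M_1$, and the lower bound $\sigma_i\ge\underline{\nu}$ from $m\ge 2$) is precisely what the paper relies on.
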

Unfortunately, we are unable to give an upper bound of the modulus of continuity of $\partial_x^2u_{i,k}$ in $\mathcal{C}^{\alpha}([0,a])$ independent of $n$. However, we are able to formulate in the weak sense a limit solution. From the regularity of the coefficients, using some tools introduced in Section \ref{sec intro}, Lemma \ref{lm : cont deruiv temps}, we get interior regularity, and a smooth limit solution.
\subsection{Proof of Theorem \ref{th : exis para}.}
\begin{proof} 
The uniqueness is a result of the comparison Theorem \ref{th : para comparison th}. To simplify the notations, we set for each $i\in\{1\ldots I\}$, and for each $(x,q,u,p,S)\in [0,a_i]\times R^4$
\begin{eqnarray*}
Q_{i}(x,u,q,p,S)~~=~~q-\sigma_i(x,p)S+H_i(x,u,p).
\end{eqnarray*}
Let $n\ge 0$. Consider the subdivision $(t_k^n=\frac{kT}{n})_{0\leq k\leq n}$ of $[0,T]$, and $(u_{k})_{0\leq k\leq n}$ the solution of \eqref{eq: schema ell}.

From estimates of Proposition \ref{pr: esti pas}, there exists a constant $M>0$ independent of $n$, such that:  
\begin{eqnarray}\label{eq : bornes glob}
&\nonumber\sup_{n\ge 0}~~\max_{k\in\{1\ldots n\}}~~\max_{i\in\{1\ldots I\}}\Big\{~~|u_{i,k}|_{(0,a_i)}~~+~~|n(u_{i,k}-u_{i,k-1})|_{(0,a_i)}~~+\\&
~~|\partial_xu_{i,k}|_{(0,a_i)}~~+~~|\partial_x^2u_{i,k}|_{(0,a_i)}~~\Big\}~~\leq~~M.
\end{eqnarray}
We define the following sequence $(v_{n})_{n\ge0}$ in $\mathcal{C}^{0,2}(\mathcal{J}_T^a)$, piecewise differentiable with respect to its first variable by: 
\begin{eqnarray*}
&\forall i\in \{1\ldots I\}, ~~v_{i,0}(0,x)=g_i(x)~~ \text{ if } x\in [0,a_i],\\
&v_{i,n}(t,x)~~=~~u_{i,k}(x)+n(t-t_k^n)(u_{i,k+1}(x)-u_{i,k}(x))~~\text{ if } (t,x)\in [t_k^n,t_{k+1}^n)\times[0,a_i].
\end{eqnarray*}
We deduce then from \eqref{eq : bornes glob}, that there exists a constant $M_1$ independent of $n$, depending only on the data of the system, such that for all $i\in\{1\ldots I\}$
\begin{eqnarray*}
|v_{i,n}|_{[0,T]\times[0,a_i]}^{\alpha}~~+~~|\partial_xv_{i,n}|_{x,[0,T]\times[0,a_i]}^{\alpha}~~\leq~~ M_1.
\end{eqnarray*}
Using Lemma \ref{lm : cont deruiv temps}, we deduce that there exists a constant $M_2(\alpha)>0$, independent of $n$, such that for all $i\in\{1\ldots I\}$, we have the following global H\"{o}lder condition:
\begin{eqnarray*}
|\partial_xv_{i,n}|_{t,[0,T]\times[0,a_i]}^{\frac{\alpha}{2}}~~+~~|\partial_xv_{i,n}|_{x,[0,T]\times[0,a_i]}^{\alpha}~~\leq ~~M_2(\alpha).
\end{eqnarray*}
We deduce then from Ascoli's Theorem, that up to a sub sequence $n$, $(v_{i,n})_{n\ge 0}$ converge in $\mathcal{C}^{0,1}([0,T]\times[0,a_i])$ to $v_i$, and then $v_i\in \mathcal{C}^{\frac{\alpha}{2},1+\alpha}([0,T]\times[0,a_i])$.
Since $v_{n}$ satisfies the following continuity condition at the junction point 
\begin{eqnarray*}
\forall (i,j)\in \{1\ldots I\}^2,~~\forall n\ge 0,~~\forall t\in[0,T],~~v_{i,n}(t,0)~~=~~v_{j,n}(t,0),
\end{eqnarray*}
we deduce then $v\in \mathcal{C}^{\frac{\alpha}{2},1+\alpha}(\mathcal{J}^a_T)$.

We now focus on the regularity of $v$ in $\overset{\circ}{\mathcal{J}^a_T}$, and we will prove that $v\in \mathcal{C}^{1+\frac{\alpha}{2},2+\alpha}(\overset{\circ}{\mathcal{J}^a_T})$, and satisfies on each edge: 
\begin{eqnarray*}
Q_{i}(x,v_i(t,x),\partial_tv_i(t,x),\partial_xv_i(t,x),\partial_x^2v_i(t,x))~~=~~0,~~ \text{ if } (t,x) \in (0,T)\times (0,a_i).
\end{eqnarray*}
Using once again \eqref{eq : bornes glob}, there exists a constant $M_3$ independent of $n$, such that for each $i\in\{1\ldots I\}$:
\begin{eqnarray*}
\|\partial_{t}v_{i,n}\|_{L^{2}((0,T)\times(0,a_i))}~~\leq~~ M_3,~~\|\partial_x^2v_{i,n}\|_{L^{2}((0,T)\times(0,a_i))}~~\leq~~ M_3.
\end{eqnarray*}
Hence we get up to a sub sequence, that:
\begin{eqnarray*}
\partial_{t}v_{i,n}~~{\rightharpoonup}~~\partial_{t}v_i,~~\partial_x^2v_{i,n}~~{\rightharpoonup}~~\partial_x^2v_i,
\end{eqnarray*}
weakly in $L^2((0,T)\times(0,a_i))$.

The continuity of the coefficients $(\sigma_i,H_i)_{i\in\{1 \ldots I\}}$, Lebesgue Theorem, the linearity of $Q_i$ in the variable $\partial_t$ and $\partial_x^2$, allows us to get for each $i\in\{1 \ldots I\}$, up to a subsequence $n_p$:
\begin{eqnarray*}
&\displaystyle\int_0^T\int_0^{a_i}\Big(Q_{i}(x,v_{i,n_p}(t,x),\partial_tv_{i,n_p}(t,x),\partial_xv_{i,n_p}(t,x),\partial_x^2v_{i,n_p}(t,x))\Big)\psi(t,x)dxdt\\
&\xrightarrow[]{p\to +\infty}~~\displaystyle\int_0^T\int_0^{a_i}\Big(Q_{i}(x,v_i(t,x),\partial_tv_i(t,x),\partial_xv_i(t,x),\partial_x^2v_i(t,x))\Big)\psi(t,x)dxdt,\\
&~~\forall \psi\in\mathcal{C}_c^\infty((0,T)\times (0,a_i)).
\end{eqnarray*}
We now prove that for any $\psi\in\mathcal{C}_c^\infty((0,T)\times (0,a_i))$:
\begin{eqnarray*}
&\displaystyle\int_0^T\int_0^{a_i}\Big(Q_{i}(x,v_{i,n_p}(t,x),\partial_tv_{i,n_p}(t,x),\partial_xv_{i,n_p}(t,x),\partial_x^2v_{i,n_p}(t,x))\Big)\psi(t,x)dxdt
~~\xrightarrow[]{p\to +\infty}~~0.
\end{eqnarray*}
Using that $(u_{k})_{0\leq k\leq n}$ is the solution of \eqref{eq: schema ell}, we get for any $\psi\in\mathcal{C}_c^\infty((0,T)\times (0,a_i))$:
\begin{eqnarray} \label{eq : eq 1}
\nonumber &\displaystyle\int_0^T\int_0^{a_i}\Big(Q_{i}(x,v_{i,n}(t,x),\partial_tv_{i,n}(t,x),\partial_xv_{i,n}(t,x),\partial_x^2v_{i,n}(t,x))\Big)\psi(t,x)dxdt~~=\\\nonumber 
&\displaystyle\sum_{k=0}^{n-1}~~\displaystyle\int_{t_k^n}^{t_{k+1}^n}\int_0^{a_i}\Big(~\sigma_i(x,\partial_xu_{i,k+1}(x))\partial_x^2u_{i,k+1}(x)-\sigma_i(x,\partial_xv_{i,n}(t,x))\partial_x^2v_{i,n}(t,x)
\\&+H_i(x,v_{i,n}(t,x),\partial_xv_{i,n}(t,x))-H_i(x,u_{i,k+1}(x),\partial_xu_{i,k+1}(x))~\Big)\psi(t,x)dxdt.
\end{eqnarray}
Using assumption $(\mathcal{P})$ more precisely the Lipschitz continuity of the Hamiltonians $H_i$, the H\"{o}lder equicontinuity in time of $(v_{i,n},\partial_xv_{i,n})$, there exists a constant $M_4(\alpha)$ independent of $n$, such that for each $i\in \{1\ldots I\}$, for each $(t,x)\in [t_k^n,t_{k+1}^n]\times[0,a_i]$:
\begin{eqnarray*}
|H_i(x,u_{i,k+1}(x),\partial_xu_{i,k+1}(x))
-H_i(x,v_{i,n}(t,x),\partial_xv_{i,n}(t,x))|&\leq&M_4(\alpha)(t-t_k^n)^{\frac{\alpha}{2}}, 
\end{eqnarray*}
and therefore for any $\psi\in\mathcal{C}_c^\infty((0,T)\times (0,a_i))$:
\begin{eqnarray*}
&\Big|~~\displaystyle\sum_{k=0}^{n-1}~~\displaystyle\int_{t_k^n}^{t_{k+1}^n}\int_0^{a_i}\Big(H_i(x,u_{i,k+1}(x),\partial_xu_{i,k+1}(x))
-H_i(x,v_{i,n}(t,x),\partial_xv_{i,n}(t,x))\Big)\psi(t,x)dxdt~~\Big|\leq\\
&a_iM_4(\alpha)|\psi|_{(0,T)\times(0,a_i)}n^{-\frac{\alpha}{2}}~~\xrightarrow[]{n\to +\infty}~~0.
\end{eqnarray*}
For the last term in \eqref{eq : eq 1}, we write  for each $i\in \{1\ldots I\}$, for each $(t,x)\in (t_k^n,t_{k+1}^n)\times(0,a_i)$:
\begin{eqnarray}
\nonumber &\sigma_i(x,\partial_xu_{i,k+1}(x))\partial_x^2u_{i,k+1}(x)-\sigma_i(x,\partial_xv_{i,n}(t,x))\partial_x^2v_{i,n}(t,x)~~=\\~~\label{eq : 2}&
\Big(\sigma_i(x,\partial_xu_{i,k+1}(x))-\sigma_i(x,\partial_xv_{i,n}(t,x))\Big)\partial_x^2u_{i,k}(x)+\\\label{eq : 3}&\Big(\sigma_i(x,\partial_xu_{i,k+1}(x))-n(t-t_k^n)\sigma_i(x,\partial_xv_{i,n}(t,x))\Big)\Big(\partial_x^2u_{i,k+1}(x)-\partial_x^2u_{i,k}(x)\Big).
\end{eqnarray}
Using again the  H\"{o}lder equicontinuity in time of $(v_{i,n},\partial_xv_{i,n})$ as well as the uniform bound on $|\partial_x^2u_{i,k}|_{[0,a_i]}$ \eqref{eq : bornes glob}, we can show that for \eqref{eq : 2}, for any $\psi\in\mathcal{C}_c^\infty((0,T)\times (0,a_i))$:
\begin{eqnarray*}
\Big|~~\displaystyle\sum_{k=0}^{n-1}~~\displaystyle\int_{t_k^n}^{t_{k+1}^n}\int_0^{a_i}\Big(\sigma_i(x,\partial_xu_{i,k+1}(x))-\sigma_i(x,\partial_xv_{i,n}(t,x))\Big)\partial_x^2u_{i,k}(x)\psi(t,x)dxdt~~\Big|
~~\xrightarrow[]{n\to +\infty}~~0.
\end{eqnarray*}
Finally, from assumptions $(\mathcal{P})$, for all $i\in \{1\ldots I\}$, $\sigma_i$ is differentiable with respect to all its variable, integrating by part we get for \eqref{eq : 3}:
\begin{eqnarray*}
&\Big|\displaystyle\sum_{k=0}^{n-1}\displaystyle\int_{t_k^n}^{t_{k+1}^n}\int_0^{a_i}\Big(\sigma_i(x,\partial_xu_{i,k+1}(x))-n(t-t_k^n)\sigma_i(x,\partial_xv_{i,n}(t,x))\Big)\\
&\Big(\partial_x^2u_{i,k+1}(x)-\partial_x^2u_{i,k}(x)\Big)
\psi(t,x)dxdt\Big|=\\
&\Big|\displaystyle\sum_{k=0}^{n-1}\displaystyle\int_{t_k^n}^{t_{k+1}^n}\int_0^{a_i}\Big(\partial_x\Big(\sigma_i(x,\partial_xu_{i,k+1}(t,x))\psi(t,x)\Big)-n(t-t_k^n)\partial_x\Big(\sigma_i(x,\partial_xv_{i,n}(t,x))\psi(t,x)\Big)\Big)\\&
\Big(\partial_xu_{i,k+1}(x)-\partial_xu_{i,k}(x)\Big)dxdt\Big|~~\xrightarrow[]{n\to +\infty}~~0.
\end{eqnarray*}
We conclude that for any $\psi\in\mathcal{C}_c^\infty((0,T)\times (0,a_i))$:
\begin{eqnarray*}
~~\displaystyle\int_0^T\int_0^{a_i}\Big(Q_{i}(x,v_i(t,x),\partial_tv_i(t,x),\partial_xv_i(t,x),\partial_x^2v_i(t,x)))\Big)\psi(t,x)dxdt~~=0.
\end{eqnarray*}
It is then possible to consider the last equation as a linear one, with coefficients $\tilde{\sigma_i}(t,x)=\sigma_i(x,\partial_xv_i(t,x))$, $\tilde{H}_i(t,x)=H_i(x,v_{i}(t,x),\partial_xv_{i}(t,x))$ belonging to the class $\mathcal{C}^{\frac{\alpha}{2},\alpha}((0,T)\times(0,a_i))$, and using Theorem III.12.2 of \cite{pde para}, we get finally that for all $i\in\{1\ldots I\}$, $v_i\in \mathcal{C}^{1+\frac{\alpha}{2},2+\alpha}((0,T)\times(0,a_i))$, which means that $v\in \mathcal{C}^{1+\frac{\alpha}{2},2+\alpha}(\overset{\circ}{\mathcal{J}_T^a}) $.

We deduce that $v_i$ satisfies on each edge:
\begin{eqnarray*}
Q_{i}(x,v_i(t,x),\partial_tv_i(t,x),\partial_xv_i(t,x),\partial_x^2v_i(t,x)))~~=~~0,~~ \text{ if } (t,x) \in (0,T)\times (0,a_i).
\end{eqnarray*}
From the estimates \eqref{eq : bornes glob}, we know that $\partial_tv_{i,n}$ and $\partial_x^2v_{i,n}$ are uniformly bounded by $n$. We deduce finally that $v\in \mathcal{C}_b^{1+\frac{\alpha}{2},2+\alpha}(\overset{\circ}{\mathcal{J}_T^a})$.

We conclude by proving that $v$ satisfies the non linear Neumann boundary condition at the vertex. For this, let $t\in(0,T)$; we have up to a sub sequence $n_p$:
\begin{eqnarray*}
F(v_{n_p}(t,0),\partial_x v_{n_p}(t,0))~~\xrightarrow[p\to +\infty]{}~~F(v(t,0),\partial_x v(t,0)).&
\end{eqnarray*}
On the other hand, using that $F(u_k(0),\partial_0 u_k(x))=0$, we know from the continuity of $F$ (assumption $(\mathcal{P})$), the H\"{o}lder equicontinuity in time of $t\mapsto v_n(t,0)$, and $t\mapsto\partial_x v(t,0)$, that there exists a constant $M_5(\alpha)$ independent of $n$, such that if $t\in[t_k^n,t_{k+1}^n)$:
\begin{eqnarray*}
&|F(v_n(t,0),\partial_x v_n(t,0))|=|F(v_n(t,0),\partial_x v_n(t,0))-F(u_k(0),\partial_x u_k(0))|\leq\\&
\sup\Big\{|F(u,x)-F(v,y)|,~~\|u-v\|_{\R^I}+\|x-y\|_{\R^I}\leq M_5(\alpha)n^{-\frac{\alpha}{2}}\Big\}\xrightarrow[]{n\to +\infty}0.
\end{eqnarray*}
Therefore, we conclude once more from the continuity of $F$ (assumption $(\mathcal{P})$), the compatibility condition (assumption $(\mathcal{P})$ (v)), that for each $t\in[0,T)$:
\begin{eqnarray*}
F(v(t,0),\partial_x v(t,0))~~=~~0.
\end{eqnarray*}
On the other hand, it is easy to get: 
\begin{eqnarray*}
\forall i\in\{1\ldots I\},~~\forall x\in [0,a_i],~~~v_i(0,x)=g_i(x),~~\forall t\in[0,T],~~~v_i(t,a_i)=\phi_i(t).
\end{eqnarray*}
Finally, the expression of the upper bounds of the solution given in Theorem \ref{th : exis para}, are a consequence of Proposition \ref{pr: esti pas}, and Lemma \ref{lm : cont deruiv temps}, which completes the proof.
\end{proof}
\subsection{On the existence for unbounded junction}
We give in this subsection a result on the existence and the uniqueness of the solution for the parabolic problem \eqref{eq de base}, posed on an unbounded junction $\mathcal{J}$ defined for $I\in \mathbb{N}^*$ edges by:
\begin{eqnarray*}\mathcal{J}=\bigcup_{i=1}
^IJ_i,\text{  with:}~~\forall i\in \{1\ldots I\}~~J_i=[0,+\infty),~~\text{and}~~\forall (i,j)\in\{1\ldots I\}^2,~~i\neq j,~~J_i\cap J_i=\{0\}. \end{eqnarray*}
In the sequel, $\mathcal{C}^{0,1}(\mathcal{J}_T)\cap \mathcal{C}^{1,2}(\overset{\circ}{\mathcal{J}_T})$ is the class of function with regularity $\mathcal{C}^{0,1}([0,T]\times[0,+\infty))\cap \mathcal{C}^{1,2}((0,T)\times(0,+\infty))$ on each edge, and $L^{\infty}(\mathcal{J}_T)$ is the set of measurable real bounded maps defined on $\mathcal{J}_T$.

We introduce the following data
$$\begin{cases}
F\in \mathcal{C}^0(\R^I\times\R^I,\R)\\
g\in\mathcal{C}^{1}_b(\mathcal{J})\cap\mathcal{C}_b^{2}(\overset{\circ}{\mathcal{J}})
\end{cases},$$
and for each $i\in\{1\ldots I\}$
$$\begin{cases}
\sigma_i \in \mathcal{C}^{1}(\R_{+}\times\R,\R)\\
H_i \in \mathcal{C}^1(\R_{+}\times\R^2,\R)\\ 
\phi_i \in \mathcal{C}^1([0,T],\R)
\end{cases}.$$
We suppose furthermore that the data satisfy the following assumption
$$\textbf{Assumption } (\mathcal{P}_{\infty})$$
(i) Assumption on $F$:
$$\begin{cases}
a)~~F \text{ is decreasing with respect to its first variable,} \\
b)~~F \text{ is nondecreasing with respect to its second variable,}\\
c)~~\exists(b,B)\in\R^I\times\R^I,~~F(b,B)=0,
\end{cases}$$
or the Kirchhoff condition:
$$\begin{cases}
a)~~F \text{ is nonincreasing with respect to its first variable,} \\
b)~~F \text{ is increasing with respect to its second variable,}\\
c)~~\exists(b,B)\in\R^I\times\R^I,~~F(b,B)=0.
\end{cases}$$
We suppose moreover that there exists a parameter $m\in \R$, $m\ge 2$ such that we have\\
(ii) The (uniform) ellipticity condition on the $(\sigma_i)_{i\in\{1\ldots I\}}$ : there exist $\underline{\nu},\overline{\nu}$, strictly positive constants such that:
\begin{eqnarray*}
\forall i\in \{1\ldots I\},~~\forall (x,p)\in \R_{+}\times\R,\\
\underline{\nu}(1+|p|)^{m-2}~~\leq~~\sigma_i(x,p)~~\leq~~\overline{\nu}(1+|p|)^{m-2}.
\end{eqnarray*}
(iii) The growth of the $(H_i)_{i\in\{1\ldots I\}}$ with respect to $p$ exceed the growth of the $\sigma_i$ with respect to $p$ by no more than two, namely there exists $\mu$ an increasing real continuous function such that:
\begin{eqnarray*}
\forall i\in \{1\ldots I\},~~\forall (x,u,p)\in \R_{+}\times\R^2,~~|H_i(x,u,p)|~~\leq~\mu(|u|)(1+|p|)^{m}.
\end{eqnarray*}
(iv) We impose the following restrictions on the growth with respect to $p$ of the derivatives for the coefficients $(\sigma_i,H_i)_{i\in\{1\ldots I\}}$, which are for all $i\in \{1\ldots I\}$:
\begin{eqnarray*}
&a)~~|\partial_p\sigma_i|_{\R_{+}\times\R^2}(1+|p|)^2+|\partial_pH_i|_{\R_{+}\times\R^2}~~\leq~~\gamma(|u|)(1+|p|)^{m-1},\\
&b)~~|\partial_x\sigma_i|_{\R_{+}\times\R^2}(1+|p|)^2+|\partial_xH_i|_{\R_{+}\times\R^2}~~\leq~~\Big(\varepsilon(|u|)+P(|u|,|p|)\Big)(1+|p|)^{m+1},\\
&c)~~\forall (x,u,p)\in \R_{+}\times\R^2,~~-C_H~~\leq~~\partial_uH_i(x,u,p)~~\leq~~\Big(\varepsilon(|u|)+P(|u|,|p|)\Big)(1+|p|)^{m},
\end{eqnarray*}
where $\gamma$ and $\varepsilon$ are continuous non negative increasing functions. $P$ is a continuous function, increasing with respect to its first variable, and tends to $0$ for $p\to +\infty$, uniformly with respect to its first variable, from $[0,u_1]$ with $u_1\in R$, and $C_H>0$ is real strictly positive number. We assume that $(\gamma,\varepsilon,P,C_H)$ are independent of $i\in \{1\ldots I\}$.\\
(v) A compatibility conditions for $g$:
\begin{eqnarray*}
&F(g(0),\partial_xg(0))~~=~~0.
\end{eqnarray*}

We state here a comparison Theorem for the problem \ref{eq de base}, posed on an unbounded junction.
\begin{Theorem}\label{th : para comparison th jonct non bornée} 
Assume $(\mathcal{P}_{\infty})$. Let $u\in \mathcal{C}^{0,1}(\mathcal{J}_T)\cap \mathcal{C}^{1,2}(\overset{\circ}{\mathcal{J}_T})\cap L^{\infty}(\mathcal{J}_T)$ (resp. $v\in \mathcal{C}^{0,1}(\mathcal{J}_T)\cap \mathcal{C}^{1,2}(\overset{\circ}{\mathcal{J}_T})\cap L^{\infty}(\mathcal{J}_T)$) be a super solution (resp. a sub solution) of \eqref{eq : Neumann par } (where $a_i=+\infty$), satisfying for all $i\in\{1\ldots I\}$ for all $x\in [0,+\infty)$, $u_i(0,x)\ge v_i(0,x)$.
Then for each $(t,(x,i))\in\mathcal{J}_T$ : $u_i(t,x)\ge v_i(t,x)$.
\end{Theorem}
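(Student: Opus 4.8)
The plan is to reproduce verbatim the contradiction scheme of the bounded comparison Theorem \ref{th : para comparison th}, the only genuinely new difficulty being that $\mathcal{J}_T$ is no longer compact, so the weighted supremum of $v-u$ that produced the maximum point may fail to be attained. The $L^{\infty}(\mathcal{J}_T)$ bound on $u$ and $v$ is exactly what will let us restore compactness by a penalization at infinity, and then take the penalization parameter to zero.

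First I would replace the \emph{unbounded} weight $e^{-\lambda t+x}$ of Theorem \ref{th : para comparison th} by a weight $e^{-\lambda t}\theta(x)$, where $\theta$ is smooth, strictly increasing and \emph{bounded}, with $\theta>0$ and $\theta'>0$ (for instance $\theta(x)=2-e^{-x}$). This substitution is forced: with $e^{x}$ the product $e^{x}(v-u)$ need not even be bounded on $\R_{+}$, whereas with $\theta$ bounded and $v-u\in L^\infty$ the weighted difference stays bounded. One checks that $\theta$ still yields, at the junction, the strict inequality $\partial_x v_i(t_0,0)<\partial_x u_i(t_0,0)$ for every $i$ (here the role of the constant $1$ coming from $e^x$ is played by $\theta'(0)/\theta(0)>0$), which is precisely what is needed to contradict the monotonicity of $F$, in both the decreasing and the Kirchhoff case of $(\mathcal{P}_{\infty})$ (i). At an interior extremum the optimality conditions now read $\partial_x(v-u)=-(\theta'/\theta)(v-u)$ and $\partial_{xx}(v-u)\le c_\theta\,(v-u)$ with $c_\theta=\sup_x\big(2(\theta'/\theta)^2-\theta''/\theta\big)<\infty$, so the interior estimates of Theorem \ref{th : para comparison th} carry over with this harmless modified constant absorbed into $C_2$.

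Next, fixing $0\le s<T$ and $\lambda>C_1+C_2$, for $\eta>0$ I would maximize over $[0,s]\times\mathcal{J}$ the penalized functional
\begin{eqnarray*}
\Phi_\eta(t,(x,i))~~=~~e^{-\lambda t}\theta(x)\big(v_i(t,x)-u_i(t,x)\big)~~-~~\eta\, x .
\end{eqnarray*}
Since $e^{-\lambda t}\theta(x)(v-u)$ is bounded while $\eta x\to+\infty$, we have $\Phi_\eta\to-\infty$ as $x\to\infty$ and the supremum is attained at some finite $(t_\eta,(x_\eta,j_\eta))$. Arguing by contradiction: if $v>u$ somewhere on $\mathcal{J}_s$, then for $\eta$ small $\sup\Phi_\eta$ exceeds a fixed positive constant, which forces $(v_{j_\eta}-u_{j_\eta})(t_\eta,x_\eta)$ to stay bounded below by some $m_0>0$ uniformly in the small parameter $\eta$. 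The initial comparison $v_i(0,\cdot)\le u_i(0,\cdot)$ rules out $t_\eta=0$, and one splits into the interior case $x_\eta>0$ and the junction case $x_\eta=0$, running the computation of Theorem \ref{th : para comparison th} with the constants \eqref{eq : 1cont comp}--\eqref{eq : 2const compa}. The linear penalty $-\eta x$ contributes nothing to the second order optimality condition and only an $O(\eta)$ term to the first order one, while $\eta x_\eta=O(\eta)\to0$; hence the final inequality becomes $0\le-(\lambda-C_1-C_2)\,(v-u)(t_\eta,x_\eta)+R_\eta$ with $R_\eta\to0$, a contradiction once $\eta$ is small. Letting $\eta\to0$ and then $s\to T$, and invoking the continuity of $u$ and $v$, gives $v\le u$ on $\mathcal{J}_T$.

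The main obstacle is exactly this recovery of compactness and the uniform control it demands. Two points must be watched. First, the constants $C_1,C_2$ must stay \emph{finite on the unbounded junction}: this rests on the gradients $\partial_x u,\partial_x v$ (from the $\mathcal{C}^{0,1}(\mathcal{J}_T)$ regularity) and the Hessians $\partial_{xx}u$ (recovered from the equation together with the ellipticity $(\mathcal{P}_{\infty})$ (ii) and the growth $(\mathcal{P}_{\infty})$ (iii)) being globally bounded, so that the choice $\lambda>C_1+C_2$ can be made once and for all, independently of $\eta$. Second, the penalized maximizer must not drift out to infinity as $\eta\to0$; this is guaranteed because the positive part of $\Phi_\eta$ lives where $v-u$ is bounded below, and the penalty keeps $x_\eta$ finite. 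Once these uniform bounds are in hand, every local computation is identical to the one already carried out in the proof of Theorem \ref{th : para comparison th}.
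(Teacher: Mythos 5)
Your scheme (bounded increasing weight $\theta(x)=2-e^{-x}$ plus a linear penalization $-\eta x$ to restore compactness, then $\eta\to 0$) is a genuinely different route from the paper, which instead truncates the junction at $x=K$, works with the weight $\exp(-\lambda(K)t-\tfrac{(x-1)^2}{2})$ on the compact set $[0,s]\times\mathcal{J}^K$, lets the maximum sit at the artificial boundary $x=K$ once the interior and junction cases are excluded, and then sends $K\to\infty$ using the Gaussian decay together with $\|u\|_{L^\infty}+\|v\|_{L^\infty}$. The junction part of your argument is fine: the first-order condition at $x=0$ gives $\partial_xv_i(t_\eta,0)\le -\big(\theta'(0)/\theta(0)\big)\big(v-u\big)(t_\eta,0)+\partial_xu_i(t_\eta,0)+O(\eta)$, which is strictly below $\partial_xu_i(t_\eta,0)$ for $\eta$ small because $(v-u)(t_\eta,x_\eta)\ge m_0>0$ uniformly in $\eta$, and the monotonicity of $F$ then yields the contradiction exactly as in Theorem \ref{th : para comparison th}.

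The gap is in the interior case, and you have put your finger on it without resolving it: your argument needs a \emph{single} $\lambda>C_1+C_2$ fixed before $\eta$, where $C_1,C_2$ are now suprema over the whole unbounded junction of quantities involving $\partial_xu$, $\partial_xv$ and $\partial^2_{x,x}u$. The hypotheses of the theorem place $u,v$ in $\mathcal{C}^{0,1}(\mathcal{J}_T)\cap\mathcal{C}^{1,2}(\overset{\circ}{\mathcal{J}_T})\cap L^{\infty}(\mathcal{J}_T)$: only $u$ and $v$ themselves are assumed bounded, not their derivatives, so $C_1$ and $C_2$ may be infinite. Your proposed repair --- recovering a global Hessian bound ``from the equation together with the ellipticity and the growth'' --- does not work: a super (resp.\ sub) solution satisfies a one-sided differential \emph{inequality}, not an equation, and even for an exact solution that argument would first require global bounds on $\partial_tu$ and $\partial_xu$, which are not among the hypotheses either. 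Nor can you localize: the penalized maximizer satisfies only $\eta x_\eta\le C$, i.e.\ $x_\eta=O(1/\eta)$, so as the penalty is relaxed the maximum may drift to infinity and there is no fixed compact set on which to take the suprema defining $C_1,C_2$ (and choosing $\lambda$ after $\eta$ is circular, since $x_\eta$ depends on $\lambda$ through $\Phi_\eta$). As written, your proof establishes the comparison only under the additional assumption that the first and second space derivatives of $u$ and $v$ are globally bounded. The truncation device of the paper is designed precisely to avoid this: on each $[0,T]\times[0,K]$ the constants \eqref{eq : 1cont comp}--\eqref{eq : 2const compa} are finite by continuity alone, the contradiction argument forces the maximum to the outer boundary $x=K$, and the conclusion is then extracted from the decay of the Gaussian weight there, which is the only place the $L^\infty$ assumption on $u$ and $v$ is actually used.
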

\begin{proof}
Let $s\in[0,T)$, $K=(K\ldots K)>(1,\ldots 1)$ in $\R^I$, and $\lambda=\lambda(K)>0$, that will be chosen in the sequel. We argue as in the proof of Theorem \ref{th : para comparison th}, assuming 
\begin{eqnarray*}
\sup_{(t,(x,i))\in \mathcal{J}^K_{s}} \exp(-\lambda t-\frac{(x-1)^2}{2})\Big(v_{i}(t,x)-u_{i}(t,x)\Big)>0.
\end{eqnarray*}
Using the boundary conditions satisfied by $u$ and $v$, the above supremum  is reached at a point $(t_0,(x_0,j_0))\in (0,s]\times \mathcal{J}$, with $0\leq x_0\leq K$.

If $x_0\in [0,K)$, the optimality conditions are given for $x_0\neq 0$ by:
\begin{eqnarray*}
&-\lambda(v_{j_0}(t_0,x_0)-u_{j_0}(t_0,x_0))~~+~~\partial_tv_{j_0}(t_0,x_0)-\partial_tu_{j_0}(t_0,x_0)~~\ge~~0,\\
&-(x_0-1)\Big(v_{j_0}(t_0,x_0)-u_{j_0}(t_0,x_0)\Big)~~+~~\partial_xv_{j_0}(t_0,x_0)-\partial_xu_{j_0}(t_0,x_0)~~=~~0,\\
&
\Big(v_{j_0}(t_0,x_0)-u_{j_0}(t_0,x_0)\Big)~~-~~2(x_0-1)^2\Big(v_{j_0}(t_0,x_0)-u_{j_0}(t_0,x_0)\Big)\\
&+~~\Big(\partial_x^2v_{j_0}(t_0,x_0)-\partial_x^2u_{j_0}(t_0,x_0)\Big)~~\leq 0,
\end{eqnarray*}
and if $x_0=0$:
\begin{eqnarray*}
\forall i\in\{1,\ldots I\},~~\partial_xv_i(t_0,0) ~~\leq ~~\partial_xu_i(t_0,0)-\Big(v_i(t_0,0)-u_i(t_0,0)\Big)~~<~~\partial_xu_i(t_0,0).
\end{eqnarray*}
If $x_0=0$, we obtain a contradiction exactly as in  the proof of Theorem \ref{th : para comparison th}. On the other hand if $x_0\in(0,K)$, using assumptions $(\mathcal{P})$ (iv) a), (iv) c) and the optimality conditions, we can choose $\lambda(K)$ of the form $\lambda(K)=C(1+K^2)$, (see \eqref{eq : 1cont comp} and \eqref{eq : 2const compa}), where $C>0$ is a constant independent of $K$, to get again a contradiction. 
We deduce that, if:
\begin{eqnarray*}
\sup_{(t,(x,i))\in \mathcal{J}^K_{s}} \exp(-\lambda(K)t-\frac{(x-1)^2}{2})\Big(v_{i}(t,x)-u_{i}(t,x)\Big)>0,
\end{eqnarray*}
then for all $(t,(x,i))\in[0,T]\times\mathcal{J}^K$:
\begin{eqnarray*}
\exp(-\lambda(K) t-\frac{(x-1)^2}{2})\Big(v_{i}(t,x)-u_{i}(t,x)\Big)\leq \exp(-\lambda(K) t-\frac{(K-1)^2}{2})\Big(v_{i}(t,K)-u_{i}(t,K)\Big).
\end{eqnarray*}
Hence for all $(t,(x,i))\in[0,T]\times\mathcal{J}^K$:
\begin{eqnarray*}
\exp(-\frac{(x-1)^2}{2})\Big(v_{i}(t,x)-u_{i}(t,x)\Big)\leq \exp(-\frac{(K-1)^2}{2})\Big(v_{i}(t,K)-u_{i}(t,K)\Big).
\end{eqnarray*}
On the other hand, if:
\begin{eqnarray*}
\sup_{(t,(x,i))\in \mathcal{J}^K_{s}} \exp(-\lambda(K)t-\frac{(x-1)^2}{2})\Big(v_{i}(t,x)-u_{i}(t,x)\Big)\leq 0,
\end{eqnarray*}
then for all $(t,(x,i))\in[0,T]\times\mathcal{J}^K$:
\begin{eqnarray*}
&\displaystyle\exp(-\lambda(K)t-\frac{(x-1)^2}{2})\Big(v_{i}(t,x)-u_{i}(t,x)\Big)\leq 0.
\end{eqnarray*}
So
\begin{eqnarray*}
\displaystyle\exp(-\frac{(x-1)^2}{2})\Big(v_{i}(t,x)-u_{i}(t,x)\Big)\leq 0.
\end{eqnarray*}
Finally we have, for all $(t,(x,i))\in[0,T]\times\mathcal{J}^K$:
\begin{eqnarray*}
\max\Big(0,\exp(-\frac{(x-1)^2}{2})\Big(v_{i}(t,x)-u_{i}(t,x)\Big)\Big)\leq\exp(-\frac{(K-1)^2}{2})\Big(||u||_{L^{\infty}(\mathcal{J}_T)}+||v||_{L^{\infty}(\mathcal{J}_T)}\Big).
\end{eqnarray*}
Sending $K\to \infty$ and using the boundedness of $u$ and $v$, we deduce the inequality $v\leq u$ in $[0,T]\times\mathcal{J}$. 
\end{proof}
\begin{Theorem}\label{th : exis para born infini }
Assume $(\mathcal{P}_{\infty})$. The following parabolic problem with Neumann boundary condition at the vertex:
\begin{eqnarray}\label{eq : pde parab bor infini }\begin{cases}\partial_tu_i(t,x)-\sigma_i(x,\partial_xu_i(t,x))\partial_x^2u_i(t,x)+\\
H_i(x,u_i(t,x),\partial_xu_i(t,x))~~=~~0,~~\text{ if } (t,x) \in (0,T)\times (0,+\infty),\\
F(u(t,0),\partial_x u(t,0))~~=~~0, ~~ \text{ if } t\in[0,T),\\
\text{with}~~u(t,0)=(u_1(t,0),\ldots,u_I(t,0)),~~\partial_x u(t,0)=(\partial_xu_1(t,0),\ldots,\partial_xu_I(t,0)),\\
\text{and}~~\forall (i,j)\in\{1\ldots I\}^2,~~u_i(t,0)=u_j(t,0),\\
\forall i\in\{1\ldots I\},~~ u_i(0,x)~~=~~g_i(x),~~ \text{ if } x\in [0,+\infty),\\
\end{cases}
\end{eqnarray}is uniquely solvable in the class $\mathcal{C}^{\frac{\alpha}{2},1+\alpha}(\mathcal{J}_T)\cap \mathcal{C}^{1+\frac{\alpha}{2},2+\alpha}(\overset{\circ}{\mathcal{J}_T})$.
There exist constants $(M_1,M_2,M_3)$, depending only the data introduced in assumption $(\mathcal{P}_{\infty})$ 
\begin{eqnarray*}
&M_1=M_1\Big(\max_{i\in\{1\ldots I\}}\Big\{~~\sup_{x\in(0,+\infty)}|-\sigma_i(x,\partial_xg_i(x))\partial_x^2g_i(x)+H_{i}(x,g_i(x),\partial_xg_i(x))|\Big\},\\
&\max_{i\in\{1\ldots I\}} |g_i|_{(0,+\infty)},C_H\Big),\\
&M_2=M_2\Big(\overline{\nu},\underline{\nu},\mu(M_1),\gamma(M_1),\varepsilon(M_1),\sup_{|p|\ge 0}P(M_1,|p|),\max_{i\in\{1\ldots I\}}|\partial_xg_i|_{(0,+\infty)},M_1\Big),
\\
&M_3=M_3\Big(M_1,\underline{\nu}(1+|p|)^{m-2},\mu(|u|)(1+|p|)^m,~~|u|\leq M_1,~~|p|\leq M_2\Big),
\end{eqnarray*}
such that:
\begin{eqnarray*}
&||u||_{\mathcal{C}(\mathcal{J}_T)}\leq M_1,~~||\partial_xu||_{\mathcal{C}(\mathcal{J}_T)}\leq M_2,~~||\partial_tu||_{\mathcal{C}(\mathcal{J}_T)}\leq M_1,~~||\partial_xu||_{\mathcal{C}(\mathcal{J}_T)}\leq M_3.
\end{eqnarray*}
Moreover, there exists a constant $M(\alpha)$ depending on $\Big(\alpha,M_1,M_2,M_3\Big)$ such that for any $a\in (0,+\infty)^I$:
\begin{eqnarray*}
&||u||_{\mathcal{C}^{\frac{\alpha}{2},1+\alpha}(\mathcal{J}_T^a)}\leq M(\alpha).
\end{eqnarray*}
\begin{proof}
Assume $(\mathcal{P}_{\infty})$ and let $a=(a,\ldots,a)\in(0,+\infty)^I$. Applying Theorem \ref{th : exis para}, we can define $u^a\in\mathcal{C}^{0,1}(\mathcal{J}_T^a)\cap \mathcal{C}^{1,2}(\overset{\circ}{\mathcal{J}^a_T})$ as the unique solution of: \begin{eqnarray}\label{eq : pde pour bord infini appro}\begin{cases}\partial_tu_i(t,x)-\sigma_i(x,\partial_xu_i(t,x))\partial_x^2u_i(t,x)+\\
H_i(x,u_i(t,x),\partial_xu_i(t,x))~~=~~0,~~\text{ if } (t,x) \in (0,T)\times (0,a),\\
F(u(t,0),\partial_x u(t,0))~~=~~0, ~~ \text{ if } t\in[0,T),\\
\forall i\in\{1\ldots I\},~~ u_i(t,a)~~=~~g_i(a),~~ \text{ if } t \in [0,T],\\
\forall i\in\{1\ldots I\},~~ u_i(0,x)~~=~~g_i(x),~~ \text{ if } x\in [0,a].\\
\end{cases}
\end{eqnarray}
Using assumption ($\mathcal{P}_{\infty}$) and Theorem \ref{th : exis para}, we get that there exists a constant $C>0$ independent of $a$ such that:
\begin{eqnarray*}
&\sup_{a\ge 0}~~||u^a||_{\mathcal{C}^{1,2}(\mathcal{J}_T^a)}~~\leq ~~C.
\end{eqnarray*}
We are going to send $a$ to $+\infty$ in \eqref{eq : pde pour bord infini appro}.
Following the same argument as for the proof of Theorem \ref{th : exis para}, we get that, up to a sub sequence, $u^a$ converges locally uniformly to some map $u$ which solves \eqref{eq : pde parab bor infini }. On the other hand, uniqueness of $u$ is a direct consequence of the comparison Theorem \ref{th : para comparison th jonct non bornée}, since $u\in L^{\infty}(\mathcal{J}_T)$. 
Finally the expression of the upper bounds of the derivatives of $u$ given in Theorem \ref{th : exis para born infini }, are a consequence of Theorem \ref{th : exis para} and assumption $(\mathcal{P}_{\infty})$.
\end{proof}
\end{Theorem}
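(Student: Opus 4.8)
The plan is to build the solution on the unbounded junction $\mathcal{J}$ by exhaustion: solve the problem on the bounded junctions $\mathcal{J}^a$, $a=(a,\ldots,a)$, and let $a\to+\infty$. For each fixed $a>0$ I would apply the bounded-junction existence result Theorem~\ref{th : exis para} to the truncated system \eqref{eq : pde pour bord infini appro}, in which the lateral Dirichlet datum at the far endpoint is the time-independent value $\phi_i\equiv g_i(a)$ and the initial datum is $g_i$. The compatibility conditions of $(\mathcal{P})$(v) are immediate: $F(g(0),\partial_x g(0))=0$ holds by $(\mathcal{P}_\infty)$(v), and $g_i(a_i)=\phi_i(0)$ is satisfied by construction. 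This yields a unique $u^a\in\mathcal{C}^{\frac{\alpha}{2},1+\alpha}(\mathcal{J}_T^a)\cap\mathcal{C}_b^{1+\frac{\alpha}{2},2+\alpha}(\overset{\circ}{\mathcal{J}_T^a})$.

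The decisive step, and the one I expect to be the main obstacle, is to verify that the constants $(M_1,M_2,M_3)$ produced by Theorem~\ref{th : exis para} can be chosen uniformly in $a$. Reading off their dependence, each $M_j$ is built only from $C_H$, the uniform ellipticity and growth data $(\underline{\nu},\overline{\nu},\mu,\gamma,\varepsilon,P)$ of $(\mathcal{P}_\infty)$(ii)--(iv), and from $\max_i|g_i|_{(0,a_i)}$, $\max_i|\partial_x g_i|_{(0,a_i)}$, $\max_i|\partial_t\phi_i|$, and $\max_i\sup_x|-\sigma_i(x,\partial_x g_i)\partial_x^2 g_i+H_i(x,g_i,\partial_x g_i)|$. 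This is exactly where the hypothesis $g\in\mathcal{C}_b^1(\mathcal{J})\cap\mathcal{C}_b^2(\overset{\circ}{\mathcal{J}})$ is essential: boundedness of $g,\partial_x g,\partial_x^2 g$ over the whole half-line, together with the growth bound $(\mathcal{P}_\infty)$(iii) on $H_i$ and the ellipticity bound on $\sigma_i$, makes all of these quantities finite and free of $a$, while $\phi_i$ being constant in $t$ gives $|\partial_t\phi_i|=0$. I would conclude that there is $C>0$, independent of $a$, with $\sup_a\|u^a\|_{\mathcal{C}^{1,2}(\mathcal{J}_T^a)}\leq C$.

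Granted this uniform bound, the limit passage mirrors the proof of Theorem~\ref{th : exis para}. On any fixed compact sub-junction the family $(u^a)$ together with $\partial_t u^a,\partial_x u^a,\partial_{x,x}^2 u^a$ is bounded in $\mathcal{C}^{1,2}$; Lemma~\ref{lm : cont deruiv temps} upgrades this to uniform H\"{o}lder equicontinuity in time of $\partial_x u^a$, so Ascoli's theorem gives a locally uniformly convergent subsequence, which a diagonal extraction along $a\to+\infty$ turns into a limit $u$ on all of $\mathcal{J}_T$. Because the edge equations and the vertex condition are local, the same weak-limit and dominated-convergence arguments already used for Theorem~\ref{th : exis para} show that $u$ solves \eqref{eq : pde parab bor infini } on each edge, satisfies $F(u(t,0),\partial_x u(t,0))=0$ and attains $u_i(0,x)=g_i(x)$, and the uniform estimates transfer to the stated bounds. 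Finally, uniqueness follows at once from the comparison Theorem~\ref{th : para comparison th jonct non bornée}, which applies since $g$ is bounded and hence $u\in L^\infty(\mathcal{J}_T)$; once the uniformity bookkeeping of the second step is done, this last part is routine.
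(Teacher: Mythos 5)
Your proposal follows exactly the paper's own strategy: solve the truncated problem on $\mathcal{J}^a_T$ via Theorem \ref{th : exis para}, observe that the constants $(M_1,M_2,M_3)$ depend only on data that are uniform in $a$ because $g$ and its derivatives are bounded on the half-line and $\phi_i\equiv g_i(a)$ is time-independent, pass to the limit by compactness and a diagonal extraction, and conclude uniqueness from the unbounded comparison Theorem \ref{th : para comparison th jonct non bornée}. Your write-up is in fact more explicit than the paper's sketch (notably on the compatibility conditions and the uniformity bookkeeping), but the route is the same.
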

\appendix 
\section{Functionnal spaces}\label{sec : functionnal spaces}
In this section, we recall several classical notations from \cite{pde para}. Let $l, T\in (0,+\infty)$ and $\Omega$ be an open and bounded subset of $\R^n$ with  smooth boundary ($n>0$). We set $\Omega_T=(0,T)\times \Omega$, and we introduce the following spaces :\\
-if $l\in 2\mathbb{N}^*$, $\Big(\mathcal{C}^{\frac{l}{2},l}(\overline{\Omega_T}),\|\cdot\|_{\mathcal{C}^{\frac{l}{2},l}(\Omega_T)}\Big)$ is the Banach space  whose elements are continuous functions $(t,x)\mapsto u(t,x)$ in $\overline{\Omega_T}$, together with all its derivatives of the form $\partial_t^r\partial_x^su$, with $2r+s<l$.
The norm $\|\cdot\|_{\mathcal{C}^{\frac{l}{2},l}(\Omega_T)}$ is defined for all $u\in \mathcal{C}^{\frac{l}{2},l}(\overline{\Omega_T})$ by:
\begin{eqnarray*}
\|u\|_{\mathcal{C}^{\frac{l}{2},l}(\Omega_T)}~~=~~\displaystyle\sum_{2r+s=j}\sup_{(t,x)\in \Omega_T}|\partial_t^r\partial^s_xu(t,x)|.
\end{eqnarray*}
-if $l\notin \mathbb{N}^*$, $\Big(\mathcal{C}^{\frac{l}{2},l}(\overline{\Omega_T}),\|.\|_{\mathcal{C}^{\frac{l}{2},l}(\Omega_T)}\Big)$ is the Banach space  whose elements are continuous functions $(t,x)\mapsto u(t,x)$ in $\overline{\Omega_T}$, together with all its derivatives of the form $\partial_t^r\partial_x^su$, with $2r+s<l$, and satisfying an H\"{o}lder condition with exponent $\frac{l-2r-s}{2}$ in their first variable, and with exponent $(l-\lfloor l \rfloor)$ in their second variable, over all the connected components of $\Omega_T$ whose radius is smaller than $1$.

The norm $\|\cdot\|_{\mathcal{C}^{\frac{l}{2},l}(\Omega_T)}$ is defined for all $u\in \mathcal{C}^{\frac{l}{2},l}(\overline{\Omega_T})$ by:
\begin{eqnarray*}
\|u\|_{\mathcal{C}^{\frac{l}{2},l}(\Omega_T)}~~=~~|u|_{\Omega_T}^{l}~~+~~\sum_{j=0}^{\lfloor l \rfloor} |u|_{\Omega_T}^{j},
\end{eqnarray*}
with:
\begin{eqnarray*}
&\forall j\in\{0,\ldots,l\},~~|u|_{\Omega_T}^{j}~~=~~\displaystyle\sum_{2r+s=j}\sup_{(t,x)\in \Omega_T}|\partial_t^r\partial^s_xu(t,x)|,\\
&|u|_{\Omega_T}^{l}~~=~~|u|_{x,\Omega_T}^{l}~~+~~|u|_{t,\Omega_T}^{\frac{l}{2}},\\ 
&|u|_{x,\Omega_T}^{l}~~=~~\displaystyle\sum_{2r+s=\lfloor l \rfloor}|\partial_t^r\partial^s_xu(t,x)|_{x,\Omega_T}^{l-\lfloor l \rfloor},\\
&|u|_{t,\Omega_T}^{l}~~=~~\displaystyle\sum_{0< l-2r-s<2}|\partial_t^r\partial^s_xu(t,x)|_{t,\Omega_T}^{\frac{l-2r-s}{2}},\\
&|u|^{\alpha}_{x,\Omega_T}~~ =~~\underset{t\in (0,T)}{\sup}~~\underset{x,y\in \Omega, x\neq y, |x-y|\leq 1}{\sup}\displaystyle\frac{|u(t,x)-u(t,y)|}{|x-y|^{\alpha}},~~0<\alpha<1,
\\
&|u|^{\alpha}_{t,\Omega_T}~~ =~~ \underset{x\in \Omega}{\sup}~~\underset{t,s\in (0,T), t\neq s, |t-s|\leq 1}{\sup}\displaystyle\frac{|u(t,x)-u(s,x)|}{|t-s|^{\alpha}},~~0<\alpha<1.
\end{eqnarray*}
- $\mathcal{C}^{\frac{l}{2},l}(\Omega_T)$ is the set whose elements $f$ belong to $\mathcal{C}^{\frac{l}{2},l}(\overline{O_T})$ for any open set $O_T$ separated from the boundary of $\Omega_T$ by a strictly positive distance, namely: 
\begin{eqnarray*}
\underset{y\in\partial \Omega_T, x \in \overline{O_T}}{\inf}~~||x-y||_{\R^n}~~>~~0.
\end{eqnarray*}
- $\mathcal{C}_b^{\frac{l}{2},l}(\Omega_T)$ is the subset of $\mathcal{C}^{\frac{l}{2},l}(\Omega_T)$ consisting in maps $u$ such that the derivatives of the form $\partial_t^r\partial_x^su$, (with $2r+s<l$) are bounded, namely $\sup_{(t,x)\in\Omega_T}|\partial_t^r\partial_x^su(t,x)| <+\infty$. 

We use the same notations when the domain does not depend on time, namely $T=0$, $\Omega_T=\Omega$, just removing the dependence on the time variable. \\
For $R>0$, we denote by $L^2((0,T)\times (0,R))$ the usual space of square integrable maps and by $\mathcal{C}_c^\infty((0,T)\times (0,R))$ the set of infinite continuous differentiable functions on $(0,T)\times (0,R)$, with compact support.  
\section{The Elliptic problem}\label{sec : appendix elliptic problem}
\begin{Proposition}\label{pr : conti para}
Let $\theta\in\R$, $i\in \{1, \dots, I\}$ and assume  ($\mathcal{E}$) holds. Let $u^\theta_i\in \mathcal{C}^{2}([0,a_i])$ be the solution of:
\begin{eqnarray}\label{eq : ell Diri pblm}
\begin{cases}-\sigma_i(x,\partial_x u^\theta_i(x))\partial_x^2u^\theta_i(x)+H_i(x,u^\theta_i(x),\partial_xu^\theta_i(x))~~=~~0, \text{ if } x \in (0,a_i)\\
u^\theta_i(0)~~=~~u^\theta(0)~~=~~\theta,
\\u^\theta_i(a_i)~~=~~\phi_i.
\end{cases}
\end{eqnarray}
Then the following map: 
\begin{eqnarray*}
\Psi :=\begin{cases}
\R \to \mathcal{C}^{2}([0,a_i])\\
\theta \mapsto u^\theta_i
\end{cases},
\end{eqnarray*}
is continuous.
\end{Proposition}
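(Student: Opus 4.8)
The plan is to prove sequential continuity of $\Psi$ by a compactness-and-uniqueness argument. Fix $\theta_0\in\R$ and let $(\theta_n)_n$ be an arbitrary sequence with $\theta_n\to\theta_0$; without loss of generality $|\theta_n|\le R$ for some $R>0$. Writing $u_n:=u^{\theta_n}_i$, I would first establish a priori bounds on $(u_n)_n$ that are uniform in $n$ (depending only on $R$, on $|\phi_i|$, and on the structural data of $(\mathcal{E})$), then extract a subsequence converging in $\mathcal{C}^2([0,a_i])$, identify the limit as a solution of \eqref{eq : ell Diri pblm} with datum $\theta_0$, invoke the uniqueness that is already presupposed in the statement (and which follows from the elliptic comparison, Theorem \ref{th: Ellip Th Comp}) to conclude the limit is $u^{\theta_0}_i$, and finally run the standard subsequence argument to upgrade this to convergence of the whole sequence. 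Since $\theta_0$ and $(\theta_n)_n$ are arbitrary, this yields continuity of $\Psi$ on all of $\R$.

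The heart of the proof is the uniform a priori estimate, which I would obtain exactly as in the elliptic theory already developed here. First, an $L^\infty$ bound $\|u_n\|_\infty\le M$ follows by comparison against suitable constant (or affine) super- and sub-solutions, using assumption $(\mathcal{E})$ (iii) on $H_i$ together with the boundary values $\theta_n$ (bounded by $R$) and $\phi_i$; the bound $M$ depends only on $R$, $|\phi_i|$ and the data. Next, a uniform gradient bound $\|\partial_x u_n\|_\infty\le M'$ is produced by the same two ingredients as in Lemma \ref{lm : borne de grad u}: boundary gradient estimates via explicit logarithmic barrier functions near the endpoints (using ellipticity $(\mathcal{E})$ (ii) and the growth restrictions $(\mathcal{E})$ (iv)), followed by an interior gradient maximum principle adapting Theorems 14.1 and 15.2 of \cite{Gilbarg}; here $M'$ depends only on $M$, on the boundary data of $\partial_x u_n$, and on the structural constants, hence is independent of $n$. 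Finally, rewriting the equation as $\partial_{x,x}^2 u_n = H_i(x,u_n,\partial_x u_n)/\sigma_i(x,\partial_x u_n)$ and using $\sigma_i\ge c>0$ gives a uniform bound $\|\partial_{x,x}^2 u_n\|_\infty\le M''$.

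With these bounds $(u_n)_n$ is bounded and equicontinuous in $\mathcal{C}^1([0,a_i])$ (the bound on $\partial_{x,x}^2 u_n$ makes $\partial_x u_n$ uniformly Lipschitz), so by Ascoli's theorem a subsequence converges in $\mathcal{C}^1([0,a_i])$ to some $w$. I would then avoid a separate Schauder estimate by exploiting the equation directly: since $\sigma_i,H_i$ are continuous and $\sigma_i\ge c$, the identity $\partial_{x,x}^2 u_n = H_i(\cdot,u_n,\partial_x u_n)/\sigma_i(\cdot,\partial_x u_n)$ forces $\partial_{x,x}^2 u_n$ to converge uniformly to $H_i(\cdot,w,\partial_x w)/\sigma_i(\cdot,\partial_x w)$. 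The theorem on uniform convergence of derivatives then shows $w\in\mathcal{C}^2([0,a_i])$, with $\partial_{x,x}^2 w$ equal to this limit, so $w$ solves the ODE in \eqref{eq : ell Diri pblm}; moreover $w(0)=\lim\theta_n=\theta_0$ and $w(a_i)=\phi_i$, hence $w$ solves \eqref{eq : ell Diri pblm} with datum $\theta_0$. By uniqueness $w=u^{\theta_0}_i$, and the convergence in fact holds in $\mathcal{C}^2$. Because every subsequence of $(u_n)_n$ admits a further subsequence converging in $\mathcal{C}^2$ to the same limit $u^{\theta_0}_i$, the full sequence converges to $u^{\theta_0}_i$, which proves continuity.

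I expect the main obstacle to be the uniform gradient estimate: one must verify that the barrier construction and the gradient maximum principle of \cite{Gilbarg} produce a constant depending only on the $L^\infty$ bound $M$ (and hence only on $R$), and not on the particular datum $\theta_n$. Once this uniformity is secured, the remaining steps---the $L^\infty$ and second-derivative bounds, the compactness, and the passage to the limit via the equation---are routine.
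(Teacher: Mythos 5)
Your proof is correct, and it shares the paper's overall compactness-and-uniqueness skeleton, but it obtains the key $\mathcal{C}^2$-compactness by a genuinely different and more self-contained route. The paper's proof is essentially three lines: it invokes the Schauder estimate (Theorem 6.6 of \cite{Gilbarg}) to get a uniform $\mathcal{C}^{2+\alpha}([0,a_i])$ bound on $u^{\theta_n}_i$, applies Ascoli directly in $\mathcal{C}^{2}$, and concludes by uniqueness. You instead build the uniform bounds by hand (the $L^\infty$ bound by comparison with affine super- and sub-solutions, the gradient bound by boundary barriers plus the gradient maximum principle in the spirit of Lemma \ref{lm : borne de grad u}, and the second-derivative bound by solving the equation for $\partial_{x,x}^2u_n$ and using $\sigma_i\ge c$), pass to a $\mathcal{C}^1$-convergent subsequence by Ascoli, and then exploit the scalar ODE structure --- the second derivative is an explicit continuous function of $(x,u_n(x),\partial_xu_n(x))$ --- to upgrade to $\mathcal{C}^2$ convergence without any H\"older estimate. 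What the paper's route buys is brevity; what yours buys is independence from the Schauder machinery, an explicit accounting of what the constants depend on (only $R$, $|\phi_i|$ and the structural data of $(\mathcal{E})$), and a cleaner treatment of full-sequence convergence via the subsequence-of-subsequences argument, which the paper passes over with ``converges necessarily.'' The concern you flag about uniformity of the gradient estimate is the right one to check, and it does hold: both the barrier construction and the gradient maximum principle see the datum $\theta_n$ only through the $L^\infty$ bound $M$, so the resulting constant is independent of $n$.
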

\begin{proof}
Let $\theta_n$ a sequence converging to $\theta$. Using the Schauder estimates Theorem 6.6 of \cite{Gilbarg}, we get that there exists a constant $M>0$ independent of $n$, depending only the data,
such that for all $\alpha \in (0,1)$:
\begin{eqnarray*}
\|u^{\theta_n}_i\|_{\mathcal{C}^{2+\alpha}([0,a_i])}&\leq& M.
\end{eqnarray*}
From Ascoli's Theorem, $u^{\theta_n}_i$ converges up to a subsequence to $v$ in $\mathcal{C}^{2}([0,a_i])$ solution of \eqref{eq : ell Diri pblm}. By uniqueness of the solution of \eqref{eq : ell Diri pblm}, $u^{\theta_n}_i$ converges necessary to the solution $u^{\theta}_i$ of \eqref{eq : ell Diri pblm} in $\mathcal{C}^{2}([0,a_i])$, which completes the proof. 
\end{proof}
\textbf{Proof of Theorem \ref{th:exi elli}.}
\begin{proof}
The uniqueness of \eqref{eq base 2} results from the elliptic comparison Theorem \ref{th: Ellip Th Comp}.

We turn to the solvalbility, and for this let $\theta \in \R$. We consider the elliptic Dirichlet problem posed on the the junction:
\begin{eqnarray}\label{eq : l1}
\begin{cases}-\sigma_i(x,\partial_x u_i(x))\partial_x^2u_i(x)+H_i(x,u_i(x),\partial_xu_i(x))~~=~~0, ~~ \text{ if } x \in (0,a_i),\\
\forall i\in\{1\ldots I\},~~u_i(0)~~=~~u(0)~~=~~\theta,
\\u_i(a_i)~~=~~\phi_i.
\end{cases}
\end{eqnarray}
For all $i\in\{1\ldots I\}$, each elliptic problem is uniquely solvable on each edge in $\mathcal{C}^{2+\alpha}([0,a_i])$, then \eqref{eq : l1} is uniquely solvable in the class $\mathcal{C}^{2+\alpha}(\mathcal{J}^a)$, and we denote by $u^\theta$ its solution.

We turn to the Neumann boundary condition at the vertex. Let us recall assumption $(\mathcal{E})$(i)
$$\begin{cases}
F \text{ is decreasing in its first variable, nondecreasing in its second variable},\\
\text{or }F \text{ is nonincreasing in its first variable, increasing in its second variable},
\\
\exists (b,B)\in\R^I\times \R^I,  \text{ such that : }F(b,B)~~=~~0.\\
\end{cases}$$
Fix now:
\begin{eqnarray*}
K_i~~=~~\sup_{(x,u)\in(0,a_i)\times(-a_iB_i,a_iB_i) }|H_i(x,u,B_i)|,\\
\theta~~\ge \underset{i \in\{1\ldots I\}}{\max}\Big\{|b_i|+|\phi_i|+|a_iB_i|+ \frac{K_i}{C_H}\Big\},
\end{eqnarray*}
and let us show that $f: x\mapsto \theta+B_ix$, is a super solution on each edge $J_{i}^{a_i}$ of \eqref{eq : l1}.

We have the boundary conditions
\begin{eqnarray*}
f(0)~~= ~~\theta, ~~f(a_i)= \theta + a_iB_i~~\ge~~ |\phi_i|+ |a_iB_i| + a_iB_i ~~\ge ~~\phi_i,
\end{eqnarray*}
and using assumption $(\mathcal{E})$ (iii), we have for all $x\in (0,a_i)$
\begin{eqnarray*}
&-\sigma_i(x,\partial_x f(x))\partial_x^2f(x)+H_i(x,f(x),\partial_xf(x))~~=~~H_i(x,\theta+B_ix,B_i)~~\ge~~~H_i(x,B_ix,B_i)~~\\
&+~~C_H\theta~~\ge~~H_i(x,B_ix,B_i)+K_i~~\ge~~0.
\end{eqnarray*} 
We then get that for each $i\in\{1\ldots I\}$, $x\in[0,a_i]$, $u_i^\theta(x)\leq \theta+B_ix$. A Taylor expansion in the neighborhood of the junction point gives that
for each $i \in\{1\ldots I\}$, $~~\partial_{x}u_i^\theta(0)\leq B_i$.\\ 
Since $u^\theta(0)\ge b$, we then get from assumption $(\mathcal{E})$ (i):
\begin{eqnarray*}
F(u^\theta(0),\partial_{x}u^\theta(0))~~\leq ~~F(b,B)~~\leq~~0.
\end{eqnarray*}
Similarly, fixing: 
\begin{eqnarray*}
\theta~~\leq~~\underset{i \in\{1\ldots I\}}{\min}\Big\{-|b_i|-|\phi_i|- |a_iB_i|- \frac{K_i}{C_H}\Big\},
\end{eqnarray*}
the map $f:x\mapsto \theta+xB_i$ is a sub solution on each vertex $J_{i}^{a_i}$ of \eqref{eq : l1}, then for each $i \in\{1\ldots I\}$, $\partial_{x}u_i^\theta(0)\ge B_i$, which means: 
\begin{eqnarray*}
F(u^\theta(0),\partial_{x}u^\theta(0))~\ge~~ 0.
\end{eqnarray*}
From Proposition \ref{pr : conti para}, we know that the real maps $\theta\mapsto u^\theta(0)$ and $\theta\mapsto\partial_{x}u^\theta(0)$ are  continuous. Using the continuity of $F$ (assumption $(\mathcal{E})$), we get that $\theta \mapsto F(u^\theta(0),\partial_xu^\theta(0))$ is continuous, and therefore there exists $\theta^{*}\in \R$ such that: \begin{eqnarray*}
F(u^{\theta^*}(0),\partial_{x}u^{\theta^*}(0))~~=~~0.
\end{eqnarray*}
We remark that $\theta^*$ is bounded by the data, namely $\theta^*$ belongs to the following interval:
\begin{eqnarray*}
&\Big[~~\underset{i \in\{1\ldots I\}}{\min}\Big\{-|b_i|-|\phi_i|-|a_iB_i|- \frac{\sup_{(x,u)\in(0,a_i) }|H_i(x,B_ix,B_i)|}{C_H}\Big\},
\\&\underset{i \in\{1\ldots I\}}{\max}\Big\{ |b_i|+ |\phi_i|+|a_iB_i|+ \frac{\sup_{(x,u)\in(0,a_i) }|H_i(x,B_ix,B_i)|}{C_H}\Big\}~~\Big].\\
\end{eqnarray*} 
This completes the proof.
Finally, since the solution $u^{\theta^*}$ of \eqref{eq base 2} is unique, we get the uniqueness of $\theta^*$.
\end{proof}

\end{document}